\newtheorem{thm}{Theorem}
\newtheorem{prop}{Proposition}
\newtheorem{lem}[prop]{Lemma}
\newtheorem{clm}[prop]{Claim}
\theoremstyle{definition}
\newtheorem{df}[prop]{Definition} 
\theoremstyle{remark}
\newtheorem{rmk}[prop]{Remark} 
\newcommand{\R}{{\mathbb{R}}}
\newcommand{\Z}{{\mathbb{Z}}}
\newcommand{\Sk}{\mathcal{S}_k}
\newcommand{\M}{\mathcal{M}}
\newcommand{\K}{\mathbb{K}}
\newcommand{\g}{\left\langle\cdot,\cdot\right\rangle}
\newcommand{\al}{\alpha}
\newcommand{\Om}{\Omega}
\newcommand{\ga}{\gamma}
\newcommand{\eps}{\epsilon}
\newcommand{\T}{\mathbb{T}}
\newcommand{\pa}{\partial}
\newcommand{\nat}{\nabla_t}
\newcommand{\bbN}{\mathbb{N}}
\def\H{\rm H}
\def\La{\Lambda_\alpha}
\def\dx{\dot{x}}
\numberwithin{equation}{section}
\title[Infinitely many noncontractible closed magnetic geodesics on non-compact manifolds]{Infinitely many noncontractible closed magnetic geodesics on non-compact manifolds}
\author{Wenmin Gong }
\address{School of Mathematical Sciences, Beijing Normal University,
	Beijing, 100875, China}
\email{ wmgong@bnu.edu.cn}
\begin{document}
\maketitle

\begin{abstract}
	In this paper we study the existence and multiplicity of periodic orbits of exact magnetic flows with energy levels above the Ma\~{n}\'{e} critical value of the universal cover on a non-compact manifold from the viewpoint of Morse theory.
\end{abstract}

\maketitle

\section{Introduction}\label{sec:1}
In this paper we study the existence and multiplicity of periodic orbits of exact magnetic flows with prescribed energy levels on a non-compact and complete Riemannian manifold. On compact Riemannian manifolds these kinds of problems have been studied  in quantity and by different approaches,  for instance the Morse-Novikov theory for (possibly multi-valued) variational functionals~\cite{No, NoT, Ta, Ta1}, the Aubry-Mather theory~\cite{CMP},  the methods from symplectic geometry~\cite{Ar, Ar1, Gi1,Gi2, GiK, HoV, Po,Lu, FS, Me2, Go1,GoX},  the heat flow method~\cite{CJSZ}, the degree theory for immersed closed curves~\cite{RoS, Sc1, Sc2, Sc3} , the variational methods~\cite{Me1, Co1,Go} and so on.

Let $(M,\g)$ be a complete, non-compact and $m$-dimensional Riemannian manifold without boundary. Let $\pi:TM\to M$ be the canonical projection.  A closed $2$-form $\Omega$ gives an endomorphism $Y:TM\to TM$, which is called a \emph{Lorentz force},  as
\begin{equation}\label{df:Loforce}
\Omega_x(u,v)=\langle Y_x(u),v\rangle_x\quad\forall x\in M \;\hbox{and}\;\forall u,v\in T_xM.
\end{equation}
The \emph{magnetic flow} of  the pair $(\g,\Omega)$ is the flow of the second order ODE:
\begin{equation}\label{e:Lorentzforce}
\nat\dot{\ga}=Y(\dot{\ga}),
\end{equation}
where $\ga:\R\to M$ is a smooth curve, and $\nat$ denotes the Levi-Civita covariant derivative. If $\Omega$ is an exact $2$-form, the flow defined by~(\ref{e:Lorentzforce}) is called an \emph{exact magnetic flow}, otherwise it is called a \emph{non-exact magnetic flow} (corresponding to the case of magnetic monopoles). In dimension two these flows are models for the motion of a particle under the Lorentz force as proposed by V. I. Arnold  ~\cite{Ar}.  A solution $\ga:\R\to M$ of (\ref{e:Lorentzforce}) is called a \emph{magnetic geodesic}.

For an exact $2$-form $\Om=-d\theta$ with $\theta$ a smooth $1$-form on $M$, the magnetic flow can be obtained as the Euler-Lagrange flow of the Lagrangian
\begin{equation}\label{e:lagrangian}
L(x,v)=\frac{1}{2}\langle v,v\rangle_x+\theta_x(v).
\end{equation}
The energy associated to the Lagrangian $L$ is defined by
$$E(x,v)=\partial_vL(x,v)[v]-L(x,v)=\frac{1}{2}\langle v,v\rangle_x,$$
and the action of the Lagrangian $L$ over an absolutely continuous curve $\ga:[a,b]\to M$
is given by
$$S_L(\ga)=\int^b_a L(\ga(t),\dot{\ga}(t))dt.$$

A closed magnetic geodesic with energy $k$ can be seen as a critical point of the functional $$S_{L+k}=\int^T_0L(\ga(t),\dot{\ga}(t))dt+kT$$
on the space of periodic curves $\ga:\R\to M$ of arbitrary period $T$. To equip this space with a
differentiable structure such that the infinite dimensional Morse theory works, we denote $W^{1,2}(\T,M)$ the space of $1$-periodic curves on $M$ of Sobolev class $W^{1,2}$, where $\T=\mathbb{R}/\mathbb{Z}$. The functional
$$\Sk:W^{1,2}(\T,M)\times \R^+\to\R,\quad
\Sk(x,T)=T\int_0^1\big(L(x,\dx/T)+k\big)dt$$
is smooth and its critical points $(x,T)$ exactly correspond to the periodic magnetic orbits $\ga(t):=x(t/T)$ of energy $k$, where $\R^+:=(0,\infty)$. In contrast to the geodesic case, we note that magnetic geodesics on different energy levels are not reparametrizations of
each other.

As a comparison, we know that closed geodesics on a non-compact manifold do not always exist, for instance on a Euclidean space, complete Riemannian manifolds of positive curvature~\cite{GrM} and $\R\times M$ with a warped product metric~\cite{Th} given by
$$\langle X,Y\rangle:=\alpha\beta+e^rg(u,v),\quad X=(\alpha,u),\;Y=(\beta,v)\in T_{(r,x)}(\R\times M),$$
where $g$ is a complete Riemannian metric on $M$. However, if some geometric or topological restrictions are put on a non-compact complete Riemannian manifold,  closed geodesics  may occur, for instance, a non-compact surface  with a complete metric neither homeomorphic to the plane nor the cylinder~\cite{Th}, a complete surface of finite area homeomorphic to a cylinder or to a M\"{o}bius band~\cite{Ba} and complete Riemannian manifolds with non-positive sectional curvatures and a non-trivial singular homology group of the loop space in a high degree~\cite{BeG,BGS}. Observe that the magnetic flow of $(\g,\Omega)$ with $\Omega=0$ is the geodesic flow of the Riemannian metric~$\g$. In view of this, in order to find closed magnetic geodesics on a non-compact manifold, it is reasonable to impose some suitable geometric or topological restrictions on the manifold, see, for instance~\cite{BPV,BPRV,MZ}.   In fact, without these restrictions there exist non-compact Riemannian manifolds on which there are no closed magnetic orbits of high energy levels, see Appendix~\ref{app:Nonorbits}. This is strongly in contrast to the case that exact magnetic flows on a closed Riemannian manifold always have closed magnetic geodesics in high energy levels~\cite{Co1, CIPP2}.

Applying Morse theory to obtain critical points of the functional $\Sk$ requires some compactness conditions for the sublevel sets of $\Sk$, for instance the Palais-Smale condition. However, in our setting where $M$ is non-compact, for a fixed energy $k$, in general, $\Sk$ does not satisfy the Palais-Smale condition. The reason comes from two aspects: \textbf{(I)} there may be Palais-Smale sequences of loops which escape towards the ends of the manifold $M$; \textbf{(II)} even on a compact manifold the functional $\Sk$ could not satisfy the Palais-Smale condition for the energy below some threshold and fails to be bounded from below. To overcome the problem caused by \textbf{(I)}, we use the technique of penalized function which was introduced by Benci and Giannoni~\cite{BeG}.

In order to circumvent the problem caused by \textbf{(II)}, we use a dynamical notion of Euler-Lagrange flow associated to a general convex suplinear Lagrangian $L:TM\to\R$ as follows:
$$ c(L):=\inf\big\{k\in\R|S_{L+k}(\ga)\geq0,\quad \forall \ga\in \mathcal{C}(M)\big\},$$
where $\mathcal{C}(M)$ denotes the set of absolutely continuous closed curves in $M$.
The real number defined above is called \emph{Ma\~{n}\'{e} critical value}, which was introduced by Ma\~{n}\'{e} in~\cite{Ma0}. There are many other  Ma\~{n}\'{e} critical values associated to different covers of $M$, which we shall define as following. Let $\widehat{M}$ be a covering of $M$ with covering projection $p$, and let $\widehat{L}$ be the lift of the Lagrangian $L$ to $\widehat{M}$ given by $\widehat{L}=L\circ dp$.
Hence, we have a critical value for  $\widehat{L}$. It is immediate that
$$c(\widehat{L})\leq c(L).$$
More generally, if $M_1$ and $M_2$ are coverings of $M$ such that $M_1$ covers $M_2$, then
$$c(L_1)\leq c(L_2).$$
Suppose that $\widetilde{M}$ is the universal covering of $M$ and $\overline{M}$ the abelian covering. The latter is defined as the covering of $M$ whose fundamental group is the kernel of the \emph{Hurewicz homomorphism} $\pi_1(M)\to\hbox{H}_1(M,\R)$. These covers give rise to the critical values
$$c_u(L):=c(\widetilde{L}),\quad\hbox{and}\quad
c_a(L):=c(\overline{L})$$
where $\widetilde{L}$ and $\overline{L}$ are the lifts of $L$ to $\widetilde{M}$ and $\overline{M}$ respectively. Moreover, it holds that
$$c_a(L)\geq c_u(L)\geq -\inf\limits_{x\in M} L(x,0).$$
The above inequalities are not hard to prove, and in general these values are not equal, see~\cite{PP}.

In this paper, we mainly restrict to the critical value $c_u(L)$ for the Lagrangian $L$ as in~(\ref{e:lagrangian}) rather than those critical values associated to other covers of $M$, and in particular we have  $c_u(L)\geq 0$. For more dynamical and geometric properties  about Ma\~{n}\'{e} critical values in the case that the manifold $M$ is compact we refer the reader to \cite{Ma1,Mat,CIPP1, CIPP2, CFP, Co1, Di, Fa}.  We also mention that if the manifold $M$ is complete and non-compact, the Ma\~{n}\'{e} critical value $c_u(L)$ can be still characterized by weak KAM solutions, and by the action potential provided that the Lagrangian satisfies suitable conditions~\cite{Co, FM}. In this case, if $\pi_1(M)$ is \emph{amenable} then $c_u(L)=c_a(L)$.

\medskip

\subsection{Main results}\label{subsec:Main}
For any $T>0$, every map in $C(\R/ T\Z,M)$ represents a homotopy class of free loops in $M$. Note that topological spaces $C(\R/ T\Z,M)$ and $C(\R/ \Z,M)$ are always homeomorphic.  For $k\in\mathbb{N}$, each $\ga\in C(\R/ T\Z,M)$ as a $T$-periodic map $\ga:\R\mapsto M$ representing a free homotopy class $\alpha$ is also viewed as a $kT$-periodic map from $\R$ to $M$ and hence as an element of $ C(\R/ kT\Z,M)$, which is called the \emph{$k$-th iteration of $\ga$} and denoted by $\ga^k$.  This $\ga^k\in C(\R/ kT\Z,M)$ represents a free homotopy class in $M$, denoted by $\al^k$.

For a free homotopy class $\alpha$, we denote by $\La M$ the component of the free loop space $\Lambda M:=C(\R/ \Z,M)$ of $M$ which represents $\al$, and define
\begin{equation}\label{e:minlen}
l_\alpha:=\inf\limits_{\ga\in \La M} \int_{\T}|\dot{\ga}(t)|dt,
\end{equation}
where $|\cdot|$ is the norm with respect to the metric $\g$.

Denote by $\nabla$ the Levi-Civita connection on $M$. Recall that the Riemann curvature tensor $R$ is defined by
$$R(X,Y)Z=\nabla_X\nabla_YZ-\nabla_Y\nabla_XZ-\nabla_{[X,Y]}Z$$
and that the sectional curvature is given by
$$K_\pi:=\frac{\langle R(X,Y)X,Y\rangle}{\langle X,Y\rangle^2-\langle X,X\rangle \langle Y,Y\rangle}.$$

Throughout this paper,  we shall denote  by $K(x)$ the supremum of sectional curvatures, i.e.,
$$K(x):=\sup\{K_\pi|\pi\subset T_xM\}.$$

The aim of this paper is to prove that under suitable geometric and topological hypotheses, for every energy level $k$ above the Ma\~{n}\'{e} critical value $c_u(L)$ there are infinitely many periodic orbits with energy $k$ on a non-compact Riemannian manifold. To do this, we shall first prove the following theorem.

\begin{thm}\label{thm: mainresult1}
	Let $M$ be a complete, non-compact connected Riemannian manifold of dimension~$m$ which is endowed  with a Riemannian metric $\g$ and an exact $2$-form $\Om=-d\theta$.  Let $\al$ be a nontrivial free homotopy class on $M$.
	Assume that
	
	\begin{itemize}
		\item[{\rm (A)}]  $l_\alpha >0$ and that there exists an integer $q>2m+1$ such that
		$$\H_q(\Lambda_\al M;\K)\neq 0$$
		where ${\rm H}_q(\cdot;\K)$ is the $q$-th singular homology group with  coefficients in $\K$;
		\item[{\rm (B)}]
		\begin{gather}
		\|\theta\|_\infty:=\sup\limits_{x\in M}|\theta(x)|<+\infty, \notag\\
		\lim\limits_{d(x,x_0)\to+\infty}|d\theta_x|=0
		\quad\hbox{and}\quad \lim\limits_{d(x,x_0)\to+\infty}|(\nabla d\theta)_x|=0,\notag	
		\end{gather}
		where $x_0$ is a fixed point in $M$;
		\item[{\rm (C)}]
		\begin{equation}
		\limsup\limits_{d(x,x_0)\to+\infty}K(x)\leq 0.\notag
		\end{equation}
		
	\end{itemize}
	
	\noindent Then for every $k\in(c_u(L),\infty)$ there exists at least one closed magnetic geodesic with energy $k$ in $M$ representing $\al$.
\end{thm}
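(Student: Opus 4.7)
The plan is to produce a critical point of $\Sk$ on the component $\Lambda_\alpha M\times\R^+$ via a minimax procedure driven by the class in $\rH_q(\Lambda_\alpha M;\K)$, after regularising the variational problem to cope with the non-compactness of $M$. Obstacle~(II) of the introduction is essentially neutralised by the choice $k>c_u(L)$, which, together with the nontriviality of $\alpha$, gives a Ma\~n\'e-type lower bound on $\Sk$ restricted to loops in $\Lambda_\alpha M$ lying in any fixed compact subset. To resolve obstacle~(I) I would follow Benci--Giannoni and introduce, for each $R\gg 1$, a smooth nondecreasing penalty $\chi_R:[0,\infty)\to[0,\infty)$ vanishing on $[0,R]$ and tending to $+\infty$ at infinity, and set
$$\mathcal{S}_k^R(x,T)=\Sk(x,T)+T\int_0^1 \chi_R\bigl(d(x(t),x_0)\bigr)\,dt.$$
Hypothesis~(B), in particular $\|\theta\|_\infty<\infty$, would then be used to check that $\mathcal{S}_k^R$ is bounded below, coercive on sub-level sets, and satisfies the Palais--Smale condition on $\Lambda_\alpha M\times\R^+$; the condition $l_\alpha>0$ provides the uniform positive lower bound on $T$ for critical curves, preventing the collapse $T\to 0$.

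Next, infinite-dimensional Morse theory is applied to $\mathcal{S}_k^R$. The nonzero class $a\in\rH_q(\Lambda_\alpha M;\K)$ yields a minimax value
$$c_R=\inf_{[\sigma]=a}\ \max_{z\in|\sigma|}\,\mathcal{S}_k^R(\sigma(z)),$$
which is a critical value realised by a critical point $(x_R,T_R)$ satisfying the standard index--nullity inequalities $\mathrm{ind}(x_R,T_R)\le q\le \mathrm{ind}(x_R,T_R)+\mathrm{null}(x_R,T_R)$. Uniform bounds on $c_R$, $T_R$ and $\|\dot x_R\|_{L^2}$, extracted from energy conservation and from the a priori lower bound on $T_R$, would let us pass to the limit $R\to\infty$. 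Once the decisive estimate $\sup_t d(x_R(t),x_0)\le R^*$ is established with $R^*$ independent of $R$, the penalty term vanishes on $(x_R,T_R)$ for $R>R^*$ and $(x_R,T_R)$ becomes a genuine critical point of $\Sk$, yielding the desired closed magnetic geodesic of energy $k$ representing $\alpha$.

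The hard part is exactly this uniform confinement estimate. I would argue by contradiction: if $\sup_t d(x_R(t),x_0)\to\infty$ along a subsequence, then by (B) and (C) the orbit $\gamma_R$ traverses regions in which $K\le\delta$ and $|d\theta|,|\nabla d\theta|\le\delta$ for $\delta$ arbitrarily small. The second variation of $\Sk$ at $(x_R,T_R)$ then decomposes, up to an error of order $\delta$, as the geodesic Hessian
$$Q(\xi)=\int_0^{T_R}\!\!\bigl(|\nabla_t\xi|^2-\langle R(\xi,\dot\gamma_R)\dot\gamma_R,\xi\rangle\bigr)\,dt$$
plus a magnetic correction controlled pointwise by $|d\theta|$ and $|\nabla d\theta|$ along $\gamma_R$. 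Hypothesis~(C) makes the curvature contribution nonpositive on the asymptotic portion of $\gamma_R$, and (B) forces the magnetic correction there to be negligible; a Jacobi-field counting argument on the tangent space $W^{1,2}(\T,\gamma_R^*TM)\oplus\R$, separating the asymptotic and the compact parts of $\gamma_R$, would then bound $\mathrm{ind}(x_R,T_R)+\mathrm{null}(x_R,T_R)$ by at most $2m+1$. This contradicts the minimax lower bound $\mathrm{ind}+\mathrm{null}\ge q>2m+1$, forcing the loops $x_R$ to lie in a common compact set and closing the proof. This index--nullity estimate is the technical heart of the argument, and it is precisely where the three hypotheses (A), (B), (C) interlock through the dimensional threshold $q>2m+1$.
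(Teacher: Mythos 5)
Your proposal follows essentially the same route as the paper: a Benci--Giannoni penalization to restore the Palais--Smale condition, the degenerate minimax Morse theorem producing a critical point of the penalized functional with $m+m^{0}\ge q$, and a contradiction obtained by showing $m+m^{0}\le 2m+1$ when the critical loops escape to infinity, using (B), (C) and the threshold $q>2m+1$. The only cosmetic differences are the form of the penalty (integrated along the loop versus the paper's $f_\sigma(x(0))$) and the mechanism of the index bound, which the paper implements by proving the Hessian is positive definite on the codimension-$(2m+1)$ subspace $\{(\xi,0):\xi(0)=0\}$ via a Poincar\'e inequality --- and note that the uniform length bound forces the \emph{entire} loop to escape, so there is no ``compact part'' to separate in your Jacobi-field counting step.
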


\begin{rmk}\label{rmk: mainresult}
	{\rm If the exact $2$-form $\Om$ has a compact support in $M$, the condition (B) in Theorem~\ref{thm: mainresult1} obviously holds.
	}
\end{rmk}


Our main theorem in this paper is the following.

\begin{thm}\label{thm: mainresult2}
Under the assumptions of Theorem~\ref{thm: mainresult1}, furthermore if we assume that
	\begin{itemize}
		\item[{\rm (A')}] for all $i\in\mathbb{N}$,  $l_{\al^i}>0$ and that there exists an integer $q>2m+1$ such that
$$\H_q(\Lambda_{\al^i} M;\K)\neq 0\quad \forall i\in\mathbb{N}.$$
	\end{itemize}
Then for every $k\in(c_u(L),\infty)$,  either for some $j\in\mathbb{N}$ there exist infinitely many non-iteration closed magnetic geodesics with energy $k$ representing $\al^j$, or for some sequence $\{j_i\}_{i\in\mathbb{N}}$ of infinitely many positive intergers and all $i\in\mathbb{N}$ there exists at least one non-iteration closed magnetic geodesics with energy $k$ representing $\al^{j_i}$.

\end{thm}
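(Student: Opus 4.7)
The plan is to reduce Theorem~\ref{thm: mainresult2} to Theorem~\ref{thm: mainresult1} by applying the latter to every iterated class $\alpha^i$ and performing a structural analysis of the resulting primitive geodesics.

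First, hypothesis (A') is precisely condition (A) of Theorem~\ref{thm: mainresult1} specialised to every class $\alpha^i$, while conditions (B) and (C) depend only on $(M,\g,\theta)$. Thus Theorem~\ref{thm: mainresult1} applies to every $\alpha^i$ and yields, for each $i\in\mathbb{N}$, at least one closed magnetic geodesic $\gamma_i$ of energy $k$ representing $\alpha^i$. Each $\gamma_i$ factors uniquely as $\gamma_i=\bar{\gamma}_i^{n_i}$ with $\bar{\gamma}_i$ a primitive (non-iteration) closed magnetic geodesic of the same energy $k$ (uniqueness following from the conservation of magnetic energy along the flow). Writing $\beta_i$ for the free homotopy class of $\bar{\gamma}_i$, we have $\beta_i^{n_i}=\alpha^i$ as free homotopy classes.

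Next, I would argue by contradiction. Letting $P_j$ denote the set of primitive closed magnetic geodesics of energy $k$ representing $\alpha^j$, failure of both alternatives is equivalent to saying $\mathcal{P}:=\bigcup_j P_j=\{\sigma_1,\ldots,\sigma_N\}$ is finite, with $[\sigma_\ell]=\alpha^{j_\ell}$. For each $i$ I would distinguish three possibilities: (a) $\gamma_i$ itself is primitive, so $\gamma_i\in P_i\subset\mathcal{P}$ and $i\in\{j_1,\ldots,j_N\}$; (b) $\gamma_i$ is a proper iterate with $\beta_i$ a power of $\alpha$, so $\bar{\gamma}_i\in\mathcal{P}$ and, since (A') forces $\alpha$ to have infinite order on free homotopy classes (because $l_{\alpha^i}>0$ for every $i$), $n_i\, j_{\ell(i)}=i$ for some $\ell(i)$; (c) $\gamma_i$ is a proper iterate with $\beta_i$ not a power of $\alpha$. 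Cases (a) and (b) together can only cover indices $i$ lying in $\{j_1,\ldots,j_N\}\cup\bigcup_\ell j_\ell\mathbb{N}$, so for all but finitely many $i$ we are forced into case (c), i.e.\ $\bar{\gamma}_i$ represents a ``root'' class not itself a power of $\alpha$.

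The main obstacle is case (c). Here I would invoke the compactness machinery developed for Theorem~\ref{thm: mainresult1}---in particular the penalized action functional and the uniform $W^{1,2}$-bounds it affords---to show that primitive closed magnetic geodesics of energy $k$ with length at most $L$ form a finite set for every $L>0$. Combining this with the length inequality $n_i\cdot\mathrm{length}(\bar{\gamma}_i)=\mathrm{length}(\gamma_i)\geq l_{\alpha^i}$ and a growth lower bound on $l_{\alpha^i}$ (deducible from (A'), from the nonpositive curvature at infinity in (C), and from the infinite order of $\alpha$), one forces the family $\{\bar{\gamma}_i\}$ to contain arbitrarily many pairwise distinct primitive magnetic geodesics. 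Pigeonholing their free homotopy classes then yields either a single $\alpha^{j^*}$ represented by infinitely many primitives (conclusion~(i)) or infinitely many distinct $\alpha^{j_i}$ each represented by a primitive (conclusion~(ii)). Carrying out this pigeonhole carefully in case~(c), where $\beta_i$ is not itself a power of $\alpha$ and one must relate it back to the powers of $\alpha$ without imposing extra algebraic structure on $\pi_1(M)$, is the step I expect to require the most care.
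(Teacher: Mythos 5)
There is a genuine gap, and it occurs exactly where you flag the ``main obstacle'' --- but the fatal problem appears one step earlier, in your case analysis. You claim that cases (a) and (b) only cover indices $i$ lying in $\{j_1,\dots,j_N\}\cup\bigcup_\ell j_\ell\mathbb{N}$, and conclude that all but finitely many $i$ fall into case (c). But $\bigcup_\ell j_\ell\mathbb{N}$ is an infinite (typically cofinite) set; if any primitive represents $\alpha$ itself, i.e.\ some $j_\ell=1$, it is all of $\mathbb{N}$. The worst-case scenario --- which your argument never touches --- is that a \emph{single} primitive $\hat\gamma$ representing $\alpha$ exists and every geodesic produced by Theorem~\ref{thm: mainresult1} in the class $\alpha^i$ is just $\hat\gamma^i$. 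This is perfectly consistent with the finiteness of your set $\mathcal{P}$, so no contradiction arises from existence alone, and case (c) need never occur. (Separately, even within case (c) your two supporting claims are unavailable: primitive magnetic geodesics of energy $k$ and bounded length need not form a finite set without nondegeneracy, since Palais--Smale gives compactness, not finiteness, of the critical set; and (A$'$) asserts only $l_{\alpha^i}>0$, not any growth of $l_{\alpha^i}$.)

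The missing idea is that one must use the \emph{quantitative Morse-theoretic output} of Theorem~\ref{thm: mainresult1}, not just existence, to rule out the all-iterates scenario. The paper assumes all closed magnetic geodesics in high powers of $\alpha$ are iterates of finitely many primitives $\hat\gamma_1,\dots,\hat\gamma_r$, passes to the common power $\rho=i_0!$ so that $\mathcal{K}(\mathcal{S}_k,\alpha^{\ell\rho})=\{\mathbb{T}\cdot\gamma_i^{\ell}\}$, and then derives two incompatible statements about critical groups in the fixed degree $q>2m+1$: Claim~\ref{clm:novanishing} uses the penalization together with the Marino--Prodi perturbation estimate of Theorem~\ref{thm:Morsethm2} and $\mathrm{H}_q(\Lambda_{\alpha^{\ell\rho}}M;\mathbb{K})\neq 0$ to force $C_q(\mathcal{S}_k,\mathbb{T}\cdot\gamma_{*}^{\ell};\mathbb{K})\neq 0$ for every $\ell$; Claim~\ref{clm:vanishing} uses the shifting theorem and the Bott-type iteration inequalities~(\ref{e:iterationineq1}) to show that for $\ell$ large all these critical groups vanish (the index of $\gamma_i^{\ell}$ either stays bounded, in which case $q$ exceeds index plus nullity, or grows linearly, in which case $q$ falls below the index). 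Your proposal contains none of this index-iteration machinery, and without it the theorem cannot be reached along the route you describe.
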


Using Theorem~\ref{thm: mainresult1} one can obtain a sequence of closed magnetic geodesics with energy $k\in(c_u(L),\infty)$.  To prove~ Theorem~\ref{thm: mainresult2}  one needs to exclude that these infinitely many critical points are the iterations of finitely many periodic orbits. Our strategy is to argue indirectly by  assuming that there are only finitely many non-iteration closed magnetic geodesics with energy $k$, and consider critical groups of these critical orbits under the iteration map~(\ref{e:iterate}). On the one hand, one would like to use assumption~(A') and some basic properties of critical groups to produce a non-vanishing critical group of  an arbitrarily large $n$-th iteration of some critical orbit.  The main difficulty to do this is the
already mentioned lack of the Palais-Smale condition.
We again appeal to the penalization method as done in the proof of Theorem~\ref{thm: mainresult1}. But this time the penalized functional may have other critical points except for those critical orbits we have assumed, and this makes difficult to extract useful information of those assumed finitely many periodic orbits by use of critical groups. A useful observation is that the critical orbits obtained in Theorem~\ref{thm: mainresult1}
have large Morse index larger than $2m+1$, see Remark~\ref{rem:thm1}. To make use of this point
we establish a lower bound estimate of the sum of dimensions of critical groups of isolated critical orbits belonging to  \emph{$g$-critical sets} by  Marino and Prodi's perturbation method  in~\cite{MP}, see Section~\ref{subsec:Cr}.  On the other hand, we employ the index theory for iterated periodic orbits (see Section~\ref{sec:index}) to show that for large $n$-th iterations of those finitely many critical orbits their critical groups all vanish. This contradiction shows that there are infinitely many geometrically different closed magnetic geodesics with energy  $k\in(c_u(L),\infty)$.

For the case that the non-compact Riemannian manifold $M$ is simply connected or that the energy levels are less than $c_u(L)$, the existence of closed magnetic geodesics with prescribed energy levels on non-compact Riemannian manifolds is not discussed in the present paper because the approach considered here seems to be  incapable of dealing with the mentioned cases. However, under certain assumptions on the geometry and  topology of a non-compact manifold, for almost every $k\in(0,c_u(L))$ the exact magnetic flow associated to a $\mathcal{L}$-shrinking Lagrangian has a contractible periodic  orbit with energy $k$, see~\cite{Go}.

\section{Preliminaries}
	
\subsection{The functional setting}\label{subsec:funcst}
	Let $(M,\g)$ be an $m$-dimensional Riemannian manifold without boundary. By the Nash embedding theorem, one can embed $M$ isometrically in $\R^N$ (for $N$ large enough) which is equipped with the Euclidean metric.
	
	Consider the Sobolev space of loops
	$$W^{1,2}(\T,\R^N)=\bigg\{x:\T\to \R^N\big|x\;\hbox{is absolutely continuous and} \int_{\T}|x'(t)|_{\R^N}^2dt<\infty\bigg\},$$
	where $|\cdot|_{\R^N}$ is the norm induced by the standard inner product $\langle \cdot, \cdot\rangle_{\R^N}$ on $\R^N$.
	
	From now on, we assume that $M$ is a subset of $\R^N$ for some large integer $N$ whose Riemannian structure is induced by the Euclidean metric $\langle \cdot, \cdot\rangle_{\R^N}$. We put
	$$W^{1,2}(\T,M)=\big\{x\in W^{1,2}(\T,\R^N)|x(\T)\subset M \big\}.$$
	Its tangent space at $x\in W^{1,2}(\T,M)$ is given by
	$$T_xW^{1,2}(\T,M)=\big\{\xi\in W^{1,2}(\T,\R^N)|\xi(t)\in T_{x(t)}M,\;\hbox{for all}\;s\in \T \big\}.$$
	Define a Riemannian metric on the space  $W^{1,2}(\T,M)$ by setting
	\begin{equation}\label{e:Riemetric}
	\langle \xi,\eta\rangle_x:=\langle \xi(0), \eta(0)\rangle_{x(0)}+\int_{\T}\langle \nat\xi(t), \nat\eta(t)\rangle_{x(t)} dt,
	\end{equation}
	where $\nat$ denotes the Levi-Civita covariant derivative along $x$.
	Since $M$ is complete, $W^{1,2}(\T,M)$ is a complete Hilbert manifold, see~\cite{Kl,Pa}. In the following we denote $W^{1,2}_\alpha(\T,M)$ the connected component of $W^{1,2}(\T,M)$ which represents the free homotopy class $\al$.

	Equip the space $\M:=W^{1,2}(\T,M)\times \R^+$  with the product metric
	\begin{equation}\label{e:prodmetr}
	\big\langle(\xi,\al),(\eta,\beta)\big\rangle_{(x,T)}=\al\beta+\langle \xi,\eta\rangle_x.
	\end{equation}
   Then $\M$ is a Hilbert manifold with connected component $\M_\alpha$ representing $\alpha$, and the pair $(x,T)\in\M$ corresponds to the $T$-periodic curve $\ga(t)=x(t/T)$.  Observe that the Hilbert manifold $\M$ and components $\M_\alpha$ are not complete with the riemannian metric~(\ref{e:prodmetr}) because they do not contain the points $(x,0)\in W^{1,2}(\T,M)\times\{0\}$ which are at finite distance from $(x,1)$.

	Given $k\in\R$, consider the \emph{free period action functional}
	$$\Sk:W^{1,2}(\T,M)\times \R^+\longrightarrow \R,$$
	\begin{eqnarray}
	\Sk(x,T)&=&\int^1_0 TL\bigg(x(t),\frac{\dot{x}(t)}{T}\bigg)dt+kT,\notag\\
	&=&\int^T_0L(\ga(s),\dot{\ga}(s))ds+kT=S_{L+k}(\ga)
	\end{eqnarray}
	where $\ga(s)=x(s/T)$. It is well known that for Lagrangians $L:TM\to \R$ of the form~(\ref{e:lagrangian}), $\Sk$ is smooth.
	Moreover, $(x,T)$ is a critical point of $\Sk$ if and only if $\ga(t)=x(t/T)$ is a periodic orbit of energy $k$.
	
	Observe that the functional $\Sk$  is invariant with respect to the $\T$-action, i.e.,
	\begin{equation}\label{e:invariant}
	\psi: \T\times \M\longrightarrow \M,\quad \big(t,(x,T)\big)\longmapsto\big(x(\cdot+t),T\big).
	\end{equation}
	So the critical set of $\Sk$ consists of critical orbits $\T\cdot (x,T)$ (also denoted by $\T\cdot \ga$).

\subsection{The penalized functional}

To regain the Palais-Smale condition, we apply Benci and Giannoni's penalization method~\cite{BeG}. Take a family of proper smooth functions $\{f_\sigma:M\to[0,\infty)\}_{\sigma\in \bbN}$ which satisfies $f_1\geq f_2\geq\ldots$,
\begin{equation}\label{e:unbdd}
\lim_{d(x,x_0)\to+\infty}f_\sigma(x)=+\infty,\quad \forall \sigma\in\bbN
\end{equation}
for some fixed point $x_0\in M$, and such that for any compact set $K\subset M$ there exists $\sigma_0=\sigma_0(K)\in\bbN$ such that supp$(f_{\sigma})\cap K=\emptyset$ for all $\sigma\geq \sigma_0$.

Such a family of functions can be easily constructed by using a partition of unity of $M$, see~\cite{BeG}.
The \emph{penalized functionals} are defined as follows:
$$\Sk^\sigma:\M\longrightarrow\R,\quad \Sk^\sigma(x,T)=\Sk(x,T)+f_\sigma(x(0).$$
A direct calculation (see~Lemma~\ref{lem:firstvar}) shows that the critical points $(x,T)$ of $\Sk^\sigma$ correspond to  those curves $\ga(t)=x(t/T)$ which satisfy
\begin{gather}
\nat\dot{\ga}=Y(\dot{\ga}),\label{e:critpt1}\\
\frac{1}{2}|\dot{\ga}(t)|^2= k,\;\forall\;t\in (0,T)\quad \hbox{and} \label{e:critpt2}\\
\dot{\ga}^-(T)-\dot{\ga}^+(0)=-\nabla f_\sigma (\ga(0)).\label{e:critpt3}
\end{gather}
So every such $\ga$ is a closed magnetic geodesic with energy $k$ if and only if $\ga(0)$ is a critical point of $f_\sigma$, in particular whenever $\ga(0)\notin$ supp$(f_\sigma)$.

\subsection{The first and second variations}
In this section, we calculate the first and second variations of the action functional $\Sk^\sigma$. To compute the first one, let $s\mapsto (x_s, b_s)\in \M$ be a curve such that
$$x=x_0,\quad T=b_0,\quad \xi(t)=\frac{\pa x_s}{\pa s}(t)\bigg|_{s=0}\quad\hbox{and}\quad\al=\frac{db_s}{ds}\bigg|_{s=0}.$$
\begin{lem}\label{lem:firstvar}
	\begin{eqnarray}
	\delta\Sk^\sigma(x,T)[\xi,\al]&=&-\frac{\alpha}{2T^2}\int^1_0|\dot{x}(t)|^2dt+\frac{1}{T}\int^1_0\langle\dot{x}(t), \nat\xi\rangle dt\notag\\
	&&+\int^1_0d\theta(\xi,\dot{x})dt+k\alpha+d f_\sigma (x(0))[\xi(0)].
	\end{eqnarray}
\end{lem}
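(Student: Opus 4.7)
My plan is to decompose $\Sk^\sigma(x,T)$ into four independently tractable pieces and differentiate each separately in $s$ at $s=0$ along the given curve $s\mapsto (x_s,b_s)$. Using the explicit form $L(x,v)=\tfrac{1}{2}\langle v,v\rangle_x+\theta_x(v)$, I write
\[
\Sk^\sigma(x,T)=\frac{1}{2T}\int_0^1|\dot{x}|^2\,dt+\int_0^1\theta_x(\dot{x})\,dt+kT+f_\sigma(x(0)),
\]
so the four summands to handle are the kinetic term, the magnetic term, the energy shift, and the penalty.

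For the kinetic piece, the $b_s$-dependence of the prefactor $1/(2b_s)$ immediately produces the $-\alpha/(2T^2)\int_0^1|\dot{x}|^2\,dt$ contribution. Differentiating $|\dot{x}_s(t)|^2$ then uses metric compatibility of the Levi-Civita connection and the torsion-free identity $\nabla_s\dot{x}_s\big|_{s=0}=\nat\xi$ to give $2\langle\dot{x},\nat\xi\rangle$, hence the $\frac{1}{T}\int_0^1\langle\dot{x},\nat\xi\rangle\,dt$ term.

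The magnetic term is the one that needs care. The identity I would establish is
\[
\frac{d}{ds}\bigg|_{s=0}\theta_{x_s(t)}\bigl(\dot{x}_s(t)\bigr)=d\theta(\xi,\dot{x})+\frac{d}{dt}\bigl(\theta_x(\xi)\bigr).
\]
A clean way to verify this is in local coordinates: writing $\theta=\theta_i\,dx^i$ one expands $\pa_j\theta_i\,\xi^j\dot{x}^i=d\theta(\xi,\dot{x})+\pa_i\theta_j\,\xi^j\dot{x}^i$, and the last term is exactly $\tfrac{d}{dt}(\theta(\xi))-\theta(\dot{\xi})$, so the $\theta(\dot{\xi})$ cancels the remaining coordinate contribution. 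Integrating in $t$ and invoking the $1$-periodicity of both $x$ and $\xi$ eliminates the total derivative and leaves $\int_0^1 d\theta(\xi,\dot{x})\,dt$.

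The remaining terms are immediate: $\tfrac{d}{ds}(kb_s)|_{s=0}=k\alpha$ and $\tfrac{d}{ds}f_\sigma(x_s(0))|_{s=0}=df_\sigma(x(0))[\xi(0)]$. Summing the four contributions yields the formula. The only nontrivial step is the cancellation in the magnetic variation; everything else is routine and requires no compactness or growth assumption on $\theta$ or $f_\sigma$, since the computation is entirely pointwise on the curve.
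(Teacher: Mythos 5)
Your proof is correct and follows essentially the same route as the paper, whose own proof simply writes $\Sk^\sigma$ in the form \eqref{e:actionfun} and declares the result a direct calculation; you have merely supplied the details, and in particular your treatment of the magnetic term (the identity $\tfrac{d}{ds}\big|_{s=0}\theta_{x_s}(\dot{x}_s)=d\theta(\xi,\dot{x})+\tfrac{d}{dt}(\theta(\xi))$ followed by integration over the period) is exactly the computation implicit in the paper and reappearing in \eqref{e:1varofPhi}.
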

\begin{proof}
	Since
	\begin{equation}\label{e:actionfun}
	\Sk^\sigma(x,T)=\int^1_0\bigg(\frac{1}{2T}|\dot{x}|^2dt+\theta_x(\dot{x})\bigg)dt+kT+f_\sigma (x(0)),
	\end{equation}
	a direct calculation of $\frac{d\Sk^\sigma(x_s,b_s)}{ds}\bigg|_{s=0}$ leads to the desired formula.
\end{proof}

To do the second variation, let us denote by $\hbox{exp}:TM\to M$ the exponential map with respect to the metric $\g$ on $M$. Take a two-dimensional family of curves $(V(r,s),B(r,s))\in\M$ with $(r,s)\in(-\varepsilon,\varepsilon)\times(-\varepsilon,\varepsilon)$ for some positive number $\varepsilon$ such that
\begin{gather}
B(0,0)=T,\quad \frac{\partial B}{\partial r}(0,0)=\alpha\quad\hbox{and}\quad\frac{\partial B}{\partial s}(0,0)=\beta,\\
V(r,s)(t)=\hbox{exp}(x(t),r\xi(t)+s\eta(t)),
\quad \forall t\in\T,
\end{gather}
where $\xi,\eta\in T_xW^{1,2}(\T,M)$. Clearly, we have
$$\frac{\partial V(r,s)}{\partial r}(0,0)=\xi\quad \hbox{and}\quad\frac{\partial V(r,s)}{\partial s}(0,0)=\eta.$$ Now we can compute the second variational derivatives of $\Sk^\sigma$.

\begin{lem}\label{lem:secondvar}
	Suppose that $(x,T)$ is a critical point of $\Sk^\sigma$. Then
	\begin{eqnarray}
	\delta^2\Sk^\sigma(x,T)[(\xi,\al),(\eta,\beta)]&=&\frac{1}{T}\int^1_0\big(\langle\nat\xi,\nat\eta\rangle+\langle R(\xi,\dx)\eta,\dx\rangle\big)dt\notag\\
	&&-\frac{\alpha}{T^2}\int^1_0\langle\nat\eta,\dx\rangle dt-\frac{\beta}{T^2}\int^1_0\langle\nat\xi,\dx\rangle dt\notag\\
	&&+\int^1_0(\nabla_\xi d\theta)_x(\eta,\dx)dt+\int^1_0d\theta_x(\eta,\nat\xi)dt\notag\\
	&&+H_\sigma(x(0))[\xi(0),\eta(0)]+\frac{2\al\beta k}{T},\notag
	\end{eqnarray}
	where $H_\sigma$ is the Hessian operator of $f_\sigma$ with respect to the Riemannian metric~$\g$.
	
\end{lem}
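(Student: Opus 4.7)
\begin{pfs}
The plan is to derive the Hessian by differentiating the first-variation identity of Lemma~\ref{lem:firstvar} in a second parameter. Since $(x,T)$ is a critical point, the second variation depends only on the first-order data $(\xi,\al)$, $(\eta,\beta)$, so I am free to choose any two-parameter family realising them; a convenient one is $B(r,s):=T+r\al+s\beta$ (whence $\pa_r\pa_sB\equiv0$) together with $V(r,s)(t):=\exp_{x(t)}(r\xi(t)+s\eta(t))$. The three identities driving the computation are the torsion-free relation $\nabla_sV_t=\nat V_s$, the curvature identity $\nabla_s\nat V_r-\nat\nabla_sV_r=R(V_s,V_t)V_r$, and the pointwise consequence of~(\ref{e:critpt1})--(\ref{e:critpt2}) that $|\dx|^2\equiv 2kT^2$ on $\T$.

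I would then differentiate each of the four summands of the first-variation formula in $s$ and evaluate at $(r,s)=(0,0)$. The kinetic summand $\frac{1}{B}\int_0^1\langle V',\nat V_r\rangle\,dt$, after applying the curvature identity and integrating by parts (with the acceleration contribution absorbed via~(\ref{e:critpt1})), delivers the Jacobi integrand $\langle\nat\xi,\nat\eta\rangle+\langle R(\xi,\dx)\eta,\dx\rangle$ together with the cross-term $-\frac{\beta}{T^2}\int\langle\dx,\nat\xi\rangle\,dt$. The summand $-\frac{\al}{2B^2}\int_0^1|V'|^2\,dt$ yields the symmetric cross-term $-\frac{\al}{T^2}\int\langle\dx,\nat\eta\rangle\,dt$ together with the diagonal term $\frac{\al\beta}{T^3}\int|\dx|^2\,dt$; substituting $|\dx|^2=2kT^2$ in the latter produces precisely the term $\frac{2\al\beta k}{T}$. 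The magnetic summand $\int d\theta(V_r,V')\,dt$ supplies $(\nabla_\xi d\theta)_x(\eta,\dx)+d\theta_x(\eta,\nat\xi)$ after using $\nabla_sV'=\nat\eta$ and the tensorial expression for the covariant derivative of a $2$-form, together with the Lorentz identity~(\ref{df:Loforce}) $\langle Y(u),v\rangle=d\theta(v,u)$ to rewrite the contraction with $\dx$. The summand $k\al$ is $s$-independent by our linear choice of $B$, while $df_\sigma(V(0))[V_r(0)]$ differentiates directly to $H_\sigma(x(0))[\xi(0),\eta(0)]$ by definition of the Riemannian Hessian.

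The one bookkeeping issue I expect to monitor is the mixed covariant derivative $\nabla_sV_r|_{(0,0)}$, which surfaces in both the differentiated kinetic piece (paired with $\dx$) and the differentiated magnetic piece (inside $d\theta$). I expect these occurrences to cancel against one another once~(\ref{e:critpt1}) and the Lorentz identity~(\ref{df:Loforce}) are combined, since the two pieces then assemble into a total $t$-derivative that integrates to zero on $\T$, modulo a boundary contribution that is absorbed into $H_\sigma$ via the penalization jump condition~(\ref{e:critpt3}). Adding up the surviving pieces then reproduces the stated formula.
\end{pfs}
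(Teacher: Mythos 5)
Your proposal is correct and follows essentially the same route as the paper: the same exponential two-parameter family, the same splitting into kinetic, magnetic and penalization pieces, the torsion-free and curvature commutation identities, and the critical-point relations \eqref{e:critpt1'}--\eqref{e:critpt2'} to cancel the mixed term $\nabla_sV_r$ and to convert $\int_0^1|\dx|^2\,dt$ into $2kT^2$. The only (harmless) deviations are that you normalize $B$ to be affine so that $\partial_r\partial_sB=0$ from the start, where the paper keeps $\partial_r\partial_sB$ and cancels it between the $E$- and $\Psi$-contributions, and that you track the $t=0$ boundary term against the jump condition \eqref{e:critpt3} explicitly, which the paper leaves implicit.
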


\begin{proof}[Proof of Lemma~\ref{lem:secondvar}]
	To compute the second variation, we calculate
	$\frac{\partial^2\Sk^\sigma(V(r,s),B(r,s))}{\partial r\partial s}\big|_{r=s=0}$.  By (\ref{e:actionfun}), we write
	$$\Sk^\sigma:=E+\Phi+\Psi,$$
	where

\begin{gather}
E(x,T)=\frac{1}{2T}\int^1_0|\dot{x}|^2dt,\quad \Phi(x,T)=\int^1_0\theta_x(\dot{x})dt,\quad
\Psi(x,T)=kT+f_\sigma (x(0)).\notag
\end{gather}

    Denote
	$V(r,s,t)=V(r,s)(t)$. Since the covariant derivation has vanishing torsion and is compatible with the Riemannian metric, it holds that
	$$\nabla_{r}\frac{\partial V}{\partial t}=\nabla_t\frac{\partial V}{\partial r},\quad  \nabla_{s}\frac{\partial V}{\partial t}=\nabla_t\frac{\partial V}{\partial s}\quad\hbox{and}\quad \nabla_X
	\langle Y,Z\rangle=\langle \nabla_XY,Z\rangle+\langle Y,\nabla_XZ\rangle.$$
	Using the above formulas, we compute the second variation of the energy term.
	\begin{eqnarray}\label{e:2varofE}
	\delta^2 E(x,T)[(\xi,\al),(\eta,\beta)]&=&\frac{\partial^2E(V(r,s),B(r,s))}{\partial r\partial s}\bigg|_{r=s=0}\notag\\
	&=&\frac{\partial }{\partial r}\bigg|_{r=0}\frac{\partial }{\partial s}\bigg|_{s=0}\bigg(\frac{1}{2B(r,s)}\int^1_0\big|\partial_tV(r,s,t)\big|^2dt\bigg)\notag
	\\
	&=&\frac{\partial }{\partial r}\bigg|_{r=0}
    \int^1_0 \frac{1}{2B(r,0)}	
    \frac{\partial }{\partial s} \bigg|_{s=0}\big|\partial_tV(r,s,t)\big|^2dt\notag\\
    &&-\frac{\partial }{\partial r}\bigg|_{r=0}
    \int^1_0\frac{\partial_s B(r,0)}{2B(r,0)^2}\big|\partial_tV(r,0,t)\big|^2dt\notag\\
    &=&-\frac{\alpha}{2T^2}\int^1_0\frac{\partial }{\partial s}\bigg|_{s=0}\big|\partial_tV(0,s,t)\big|^2dt\notag\\
    &&+\frac{1}{2T}\int^1_0\frac{\partial }{\partial r}\bigg|_{r=0}\frac{\partial }{\partial s}\bigg|_{s=0}\big|\partial_tV(r,s,t)\big|^2dt\notag\\
    &&-\frac{\partial_r\partial_sB(0,0)T^2-2b\alpha\beta}{2T^4}\int^1_0|\dx(t)|^2dt\notag\\
    &&-\frac{\beta}{2T^2}\int^1_0\frac{\partial }{\partial r}\bigg|_{r=0}\big|\partial_tV(r,0,t)\big|^2dt\notag\\
    &=&\frac{1}{T}\bigg(\int^1_0\big(\langle\nat\xi,\nat\eta\rangle+\langle R(\xi,\dx)\eta,\dx\rangle-\langle\nabla_\xi\eta,\nat\dx\rangle\big)dt\bigg)
    \notag\\
    &&-\frac{\alpha}{T^2}\int^1_0\langle\nat\eta,\dx\rangle dt-\frac{\beta}{T^2}\int^1_0\langle\nat\xi,\dx\rangle dt \notag\\
    &&-\frac{\partial_r\partial_sB(0,0)T-2\al\beta}{2T^3}\int^1_0|\dx(t)|^2dt.
	\end{eqnarray}
Next, we compute the second variation of $\Phi$.
To do this, let us note that the first variation of $\Phi$ is given by
\begin{equation}\label{e:1varofPhi}
\delta\Phi(x)\xi=\int_0^1d\theta(\xi,\dx)dt.
\end{equation}
	By~(\ref{e:1varofPhi}), we have
	\begin{eqnarray}\label{e:2varofPhi}
	\delta^2 \Phi(x,T)[(\xi,\al),(\eta,\beta)]&=&\frac{\partial^2}{\partial r\partial s}\int^1_0\theta_{V(r,s,t)}(\partial_t V(r,s,t))dt\bigg|_{r=s=0}\notag\\
	&=&\frac{\partial }{\partial r}\bigg|_{r=0}\int^1_0d\theta_{V(r,0,t)}(\partial_sV(r,0,t),\partial_t V(r,0,t))dt\notag\\
	&=&\int^1_0\big((\nabla_\xi d\theta)(\eta, \dx)+d\theta(\nabla_\xi\eta,\dx)+d\theta(\eta,\nat\xi))dt.
	\end{eqnarray}
	For the second variation of $\Psi$, a direct calculation shows that
	\begin{eqnarray}\label{e:2varofPsi}
	\delta^2 \Psi(x,T)[(\xi,\al),(\eta,\beta)]=k\partial_r\partial_sB(0,0)+H_\sigma(x(0))[\xi(0),\eta(0)].
	\end{eqnarray}
	From~(\ref{e:critpt1}) and (\ref{e:critpt2}), we deduce that if $(x,T)$ is a critical point of $\Sk^\sigma$, then
	\begin{gather}
	\frac{1}{T}\int^1_0\langle\nat\dx,\zeta\rangle dt=\int^1_0d\theta(\zeta,\dx) dt,\;\forall \zeta\in T_xW^{1,2}\qquad\hbox{and}\label{e:critpt1'}
	\\
\frac{1}{2}|\dx(t)|^2=kT^2,\;\forall t\in(0,1).\label{e:critpt2'}
	\end{gather}
	By (\ref{e:critpt1'}) and (\ref{e:critpt2'}), combining (\ref{e:2varofE}), (\ref{e:2varofPhi}) and  (\ref{e:2varofPsi}) concludes the desired result.

\end{proof}

\section{The Palais-Smale condition}

	\begin{df}
		Let $\mathcal{M}$ be a Hilbert manifold modelled on a Hilbert space $\mathcal{H}$, and $f$ a $C^1$-functional on $\mathcal{M}$. Given $-\infty\leq b<a<+\infty$, if any sequence $\{x_n\}_{n=1}^\infty\subset \mathcal{M}$ satisfying $a\leq f(x_n)\leq b$ and $\|df(x_n)\|_{\mathcal{H}^*}\to0$  has a convergent subsequence then $f$ is said to satisfy the \emph{Palais-Smale condition} on $\{x\in \mathcal{M}|a\leq f(x)\leq b\}$,
		where $\|\cdot\|_{\mathcal{H}^*}$ denotes the  operator norm on $\mathcal{H}$.
	\end{df}
	
In order to obtain the Palais-Smale condition for $\Sk^\sigma$ on $\M_\al$, we begin with the following known result.
	
\begin{lem}\label{lem:bddbelow}
If $k<c_u(L)$, $\inf_{(x,T)\in\M_\al}\Sk(x,T)=-\infty$; if $k\geq c_u(L)$, then $\Sk$ is bounded from below on $\M_\al$.
\end{lem}

\begin{proof}
	The proof of the first part of the above assertion is the same as that of the case that $M$ is compact, we refer to {\cite[Lemma~4.1]{Co1}} for the detailed proof. In order to prove the second part of it, we take a closed curve $\ga\in\M_\al$. If $k< c_u(L)$, then, by definition, there exists a contractible closed curve $\ga_0$  with $\Sk(\ga_0)<0$. Let $\beta$ be a path joining $\ga(0)$ to $\ga_0(0)$. For all $n\in\bbN$ we find that the juxtapositions
	$\widetilde{\ga}_n=\beta\sharp \ga_0^n\sharp\beta^{-1}\sharp\ga$ are within the free homotopy class $\alpha$ and satisfy
	$$\Sk(\widetilde{\ga}_n)=S_L(\beta)+S_L(\beta^{-1})+n\Sk(\ga_0)+S_L(\ga)\stackrel{n}{\longrightarrow} -\infty.$$
	
\end{proof}

\begin{lem}\label{lem:bddaf0}
	Let $\{(x_n,T_n)\}$ be a sequence in $ \M_\al$ such that $\{\Sk^\sigma(x_n,T_n)\}$ is bounded from above. If $l_\al>0$ and $\|\theta\|_\infty<\infty$, then $T_n$ is bounded away from zero.
\end{lem}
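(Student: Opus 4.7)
The plan is to argue by contradiction: suppose some subsequence $T_n \to 0^+$ and derive that $\Sk^\sigma(x_n,T_n) \to +\infty$, contradicting the hypothesized upper bound. The intuition is that a loop representing the nontrivial class $\al$ must have length at least $l_\al>0$, so by Cauchy--Schwarz the $W^{1,2}$-energy of $x_n$ is bounded below by $l_\al^2$; then the kinetic term $\tfrac{1}{2T_n}\int_0^1 |\dot x_n|^2\,dt$ blows up as $T_n\to 0$, while the remaining terms can only contribute a bounded perturbation.

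Concretely, using the expression
$$\Sk^\sigma(x_n,T_n) = \frac{1}{2T_n}\int_0^1 |\dot x_n|^2\,dt + \int_0^1 \theta_{x_n}(\dot x_n)\,dt + kT_n + f_\sigma(x_n(0)),$$
I would first drop the nonnegative term $f_\sigma(x_n(0))$ to obtain a lower bound. Next, using $\|\theta\|_\infty < \infty$ together with the pointwise and integral Cauchy--Schwarz inequalities,
$$\left|\int_0^1 \theta_{x_n}(\dot x_n)\,dt\right| \leq \|\theta\|_\infty \int_0^1 |\dot x_n|\,dt \leq \|\theta\|_\infty \Bigl(\int_0^1 |\dot x_n|^2\,dt\Bigr)^{1/2}.$$
Finally, since $x_n \in \M_\al$ represents the free homotopy class $\al$ and its length coincides with that of the reparametrized curve $\ga_n(s)=x_n(s/T_n)$, one has $\int_0^1 |\dot x_n|\,dt \geq l_\al > 0$, and another application of Cauchy--Schwarz gives $E_n := \int_0^1 |\dot x_n|^2\,dt \geq l_\al^2$.

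Setting $y_n := \sqrt{E_n} \geq l_\al$, the previous steps combine to give
$$\Sk^\sigma(x_n, T_n) \geq \frac{y_n^2}{2T_n} - \|\theta\|_\infty\, y_n + kT_n.$$
For $T_n$ small enough that $\|\theta\|_\infty T_n < l_\al$, the unconstrained minimizer of the right-hand side in $y_n$ lies below $l_\al$, so the minimum over the admissible range $y_n\geq l_\al$ is attained at the endpoint $y_n=l_\al$, yielding
$$\Sk^\sigma(x_n,T_n) \geq \frac{l_\al^2}{2T_n} - \|\theta\|_\infty\, l_\al + kT_n \;\longrightarrow\; +\infty \quad\text{as } T_n\to 0^+,$$
contradicting the upper bound. (The degenerate case $\|\theta\|_\infty = 0$ is immediate from the bound $\Sk^\sigma \geq l_\al^2/(2T_n) + kT_n$.)

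There is no real obstacle in this argument; it is essentially an elementary one-variable minimization. The only point requiring mild care is ensuring that the minimization in $y_n$ is carried out over the constrained range $[l_\al,\infty)$ rather than over all of $\R$, since otherwise one recovers only a nonpositive bound. The lemma is the quantitative manifestation of the fact that noncontractible (more precisely, nontrivial free-homotopy-class) loops cannot be arbitrarily short in the $W^{1,2}$-sense, so they must accumulate kinetic energy when the period shrinks.
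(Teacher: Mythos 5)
Your proof is correct and uses essentially the same ingredients as the paper: dropping $f_\sigma\ge 0$, bounding the $\theta$-term by $\|\theta\|_\infty$ times the length, and relating length, energy and $T_n$ via Cauchy--Schwarz together with the lower bound $l_\al$ on the length of loops in the class $\al$. The only organizational difference is that the paper absorbs the $\theta$-term by Young's inequality to show the energy (hence the length) would tend to zero as $T_n\to 0$, contradicting $l_\al>0$, whereas you run the same estimate in the other direction, minimizing over the constrained energy range $y_n\ge l_\al$ to show the action would blow up; both are valid.
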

\begin{proof}
	By our assumption, we may assume that $\Sk^\sigma(x_n,T_n)\leq A$ for all $n\in\bbN$ and some positive number $A$. Since $f_\sigma\geq 0$, we have
	\begin{eqnarray}
	A\geq\Sk^\sigma(x_n,T_n)&\geq&\frac{1}{2T_n}\int^1_0|\dot{x}_n(t)|^2dt- \int^1_0\|\theta\|_\infty\cdot|\dot{x}_n(t)|dt+kT_n\notag\\
	&\geq&\bigg(\frac{1}{2T_n}-\frac{1}{4}\bigg)\int^1_0|\dot{x}_n(t)|^2dt-\|\theta\|_\infty^2+kT_n,\notag
	\end{eqnarray}
	consequently, if $0<T_n<2$, for all $n$ it holds that
	\begin{eqnarray}\label{e:energybd}
	\int^1_0|\dot{x}_n(t)|^2dt\leq \frac{4T_n}{2-T_n}\big(A+\|\theta\|_\infty^2\big).
	\end{eqnarray}
	By the H\"{o}lder inequality,
	\begin{equation}\label{e:lenthenergyineq}
	\bigg(\int^1_0|\dot{x}_n(t)|dt\bigg)^2\leq\int^1_0|\dot{x}_n(t)|^2dt.
	\end{equation}
	Combining (\ref{e:energybd}) with (\ref{e:lenthenergyineq}) implies that if $T_n\to0$ then the length of $x_n$ goes to zero. This contradicts the assumption that $l_\alpha>0$.	
\end{proof}

\begin{prop}\label{prop:PSc}
If $l_\al>0$ and $\|\theta\|_\infty<\infty$, then for any  $k> c_u(L)$, $\Sk^\sigma$ satisfies the Palais-Smale condition on $\{(x,T)\in\M_\al|\Sk^\sigma(x,T)\leq A\}$.
\end{prop}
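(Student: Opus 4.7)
The plan is to follow the classical Palais--Smale scheme for Lagrangian action functionals, modifying it to handle the non-compact target: the penalization $f_\sigma$ prevents loops from escaping to infinity in $M$, while the hypothesis $k>c_u(L)$ is used to bound the period $T$ from above via the Ma\~n\'e action potential on the universal cover.

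Fix a Palais--Smale sequence $(x_n,T_n)\subset \M_\al$ with $\Sk^\sigma(x_n,T_n)\le A$ and $\|d\Sk^\sigma(x_n,T_n)\|\to 0$. The lower bound $T_n\ge\tau>0$ is immediate from Lemma~\ref{lem:bddaf0}. To localize the base points I would combine the lower bound $\Sk\ge -B$ on $\M_\al$ from Lemma~\ref{lem:bddbelow} (applicable since $k\ge c_u(L)$) with $f_\sigma\ge 0$ to get
$$f_\sigma(x_n(0))=\Sk^\sigma(x_n,T_n)-\Sk(x_n,T_n)\le A+B;$$
by (\ref{e:unbdd}) the sublevel set $\{f_\sigma\le A+B\}$ is bounded in $M$, and its closure is compact by Hopf--Rinow.

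The decisive step is the upper bound on $T_n$, where the strict inequality $k>c_u(L)$ enters. Pick $k'\in(c_u(L),k)$ and consider the Ma\~n\'e action potential on the universal cover,
$$\Phi_{k'}(\widetilde p,\widetilde q):=\inf\bigl\{S_{\widetilde L+k'}(\widetilde\eta)\bigm|\widetilde\eta\text{ is an absolutely continuous path from }\widetilde p\text{ to }\widetilde q\bigr\},$$
which by standard Ma\~n\'e theory is finite and continuous on $\widetilde M\times\widetilde M$ since $k'>c_u(L)=c(\widetilde L)$. Lift $\ga_n(s):=x_n(s/T_n)$ to $\widetilde\ga_n:[0,T_n]\to\widetilde M$ with $\widetilde\ga_n(0)$ chosen in a compact fundamental domain $\widetilde K$, and let $g\in\pi_1(M)$ be the deck transformation associated to $\al$, so that $\widetilde\ga_n(T_n)=g\widetilde\ga_n(0)$. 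Then
$$\Sk(x_n,T_n)=S_{\widetilde L+k'}(\widetilde\ga_n)+(k-k')T_n\ge\Phi_{k'}\bigl(\widetilde\ga_n(0),g\widetilde\ga_n(0)\bigr)+(k-k')T_n.$$
Continuity of $\Phi_{k'}$ on the compact set $\widetilde K\times g\widetilde K$ bounds the first term below by some $-C$ independent of $n$; combined with $\Sk^\sigma\ge\Sk$ this yields $T_n\le(A+C)/(k-k')$.

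With $T_n$ confined to a compact interval $[\tau,C']$, a completion-of-squares estimate in the $\theta$-term as in Lemma~\ref{lem:bddaf0} bounds $\int_0^1|\dot x_n|^2\,dt$; together with the bound on $x_n(0)$, this confines $x_n$ to a fixed compact subset of $M$ and makes $(x_n)$ bounded in $W^{1,2}(\T,\R^N)$. Rellich--Kondrachov then gives, up to subsequence, $T_n\to T_*\in[\tau,C']$ and $x_n\to x_*$ uniformly and weakly in $W^{1,2}$. To upgrade to strong $W^{1,2}$-convergence I would pair $d\Sk^\sigma(x_n,T_n)\to 0$ against a test vector $\xi_n\in T_{x_n}W^{1,2}(\T,M)$ built from $x_n-x_*$ by parallel transport (or by normal projection inside the ambient $\R^N$); from Lemma~\ref{lem:firstvar}, the terms involving $d\theta$ and $df_\sigma$ vanish in the limit by $C^0$-convergence on a compact region, while the principal part $\frac{1}{T_n}\int\langle\dot x_n,\nat\xi_n\rangle\,dt\to 0$ controls $\|\nat\xi_n\|_{L^2}$ and forces strong convergence. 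I expect the main obstacle to be the third step: only there is the strict inequality $k>c_u(L)$ genuinely used, and it is essential to pass to the universal cover and invoke finiteness and continuity of the Ma\~n\'e action potential $\Phi_{k'}$; the remaining compactness is routine once $T_n$ lies in a compact interval and the loops are trapped in a fixed compact region.
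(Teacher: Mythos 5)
Your overall architecture coincides with the paper's: lower bound on $T_n$ from Lemma~\ref{lem:bddaf0}, an upper bound on $T_n$ from the strict inequality $k>c_u(L)$, confinement of the loops to a compact set via the properness of $f_\sigma$ together with a length bound, and then weak convergence upgraded to strong $W^{1,2}$-convergence by testing $d\Sk^\sigma$ against the tangential projection of $x_n-x_*$. Steps 1, 2, 4 and 5 are essentially the paper's proof. The problem is precisely the step you single out as decisive. You write $\widetilde\ga_n(T_n)=g\widetilde\ga_n(0)$ for a single deck transformation $g$ ``associated to $\al$'' and then bound $\Phi_{k'}$ below on the compact set $\widetilde K\times g\widetilde K$. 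But $\al$ only determines a conjugacy class: once you force the initial points $\widetilde\ga_n(0)$ into a fixed compact set $\widetilde K$ (and for non-compact $M$ there is no compact fundamental domain, so $\widetilde K$ can only be a compact lift of the set containing the base points), the deck transformations $g_n$ with $\widetilde\ga_n(T_n)=g_n\widetilde\ga_n(0)$ vary with $n$ over the conjugacy class, and $d_{\widetilde M}(\widetilde\ga_n(0),g_n\widetilde\ga_n(0))$ is only controlled by the length of $\ga_n$. That length bound is exactly what you do not yet have: the estimate $A\geq \tfrac{\ell_n^2}{2T_n}-\|\theta\|_\infty\ell_n+kT_n$ bounds $\ell_n$ independently of $T_n$ only when $k>\tfrac12\|\theta\|_\infty^2$, which is not implied by $k>c_u(L)$. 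So the pairs $(\widetilde\ga_n(0),\widetilde\ga_n(T_n))$ need not lie in a compact subset of $\widetilde M\times\widetilde M$, and the uniform lower bound $\Phi_{k'}\geq -C$ is unjustified. This is not cosmetic: for $c(\widetilde L)<k'<\tfrac12\|\theta\|_\infty^2$ the potential $\Phi_{k'}(\widetilde p,\widetilde q)$ really can tend to $-\infty$ as $\widetilde q$ escapes (already for $L=\tfrac12v^2+v$ on $\R$ one has $\Phi_{k'}(p,q)=(q-p)+\sqrt{2k'}\,|q-p|\to-\infty$ as $q\to-\infty$), so continuity alone does not save you.

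The repair is available and is exactly what the paper does, using a tool you already invoked in your second step: apply Lemma~\ref{lem:bddbelow} at the level $c_u(L)$ rather than at $k$, writing
$$\Sk^\sigma(x_n,T_n)=\mathcal{S}_{c_u(L)}(x_n,T_n)+(k-c_u(L))T_n+f_\sigma(x_n(0))\geq -B+(k-c_u(L))T_n,$$
which gives $T_n\leq (A+B)/(k-c_u(L))$ in one line, with no lifting and no action potential. (The same identity simultaneously gives your bound $f_\sigma(x_n(0))\leq A+B$.) In effect, the delicate point you ran into is hidden inside Lemma~\ref{lem:bddbelow}, which the paper takes as given from \cite{Co1}; re-deriving it via the Ma\~n\'e potential on $\widetilde M$ for a non-compact base requires controlling the conjugating elements, and your sketch does not do this. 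With that substitution the rest of your argument goes through and matches the paper's proof.
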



\begin{proof}
	Assume that $\{(x_n,T_n)\}$ is a sequence in $ \M_\al$ such that
	\begin{equation}\label{e:PSC}
		 \Sk^\sigma(x_n,T_n)<A\quad \hbox{and}\quad \|d\Sk^\sigma(x_n,T_n)\|\to0.
	\end{equation}
	 We will show that $\{(x_n,T_n)\}$ has a convergent subsequence in $ \M_\al$.
	First of all, we prove that $T_n$ has a convergent subsequence. By Lemma~\ref{lem:bddaf0}, $T_n\geq C$ for some positive constant $C$, so we only need to prove that $T_n$ is bounded from above. Observe that if $k>c_u(L)$ then
	\begin{equation}\notag
	\Sk^\sigma(x_n,T_n)=\mathcal{S}_{c_u(L)}(x_n,T_n)+(k-c_u(L))T_n+f_\sigma(x_n(0)).
	\end{equation}
	
	Since $f_\sigma\geq 0$ on $M$ and $\mathcal{S}_{c_u(L)}\geq-B>-\infty$ for some positive constant $B$  by Lemma~
	\ref{lem:bddbelow},
	it follows that
	\begin{equation}\label{e:Tupperbdd}
	T_n\leq (A+B)/(k-c_u(L)).
	\end{equation}
	Consequently, to complete the proof, we have to prove that $\{x_n\}$ has a convergence subsequence in $W^{1,2}(\T,M)$.
	Set $\ga_n(t):=x_n(t/T_n)$, then we have
	\begin{eqnarray}
	 A\geq\Sk^\sigma(x_n,T_n)&=&\int^{T_n}_0\bigg(\frac{1}{2}|\dot{\ga}_n(t)|^2+\theta(\dot{\ga}_n(t))\bigg)dt+kT_n+f_\sigma(\ga_n(0))\notag\\
	&\geq&\frac{1}{4}\int^{T_n}_0|\dot{\ga}_n(t)|^2dt-T_n\|\theta\|^2_\infty+kT_n+f_\sigma(\ga_n(0)),\notag
	\end{eqnarray}
	and it follows that
	\begin{gather}
	\frac{1}{T_n}\int^1_0|\dot{x}_n(t)|^2dt=\int^{T_n}_0|\dot{\ga}_n(t)|^2dt\leq 4\big(A+T_n\|\theta\|^2_\infty\big),\label{e:deribdd}\\
	f_\sigma(\ga_n(0))\leq A+T_n\|\theta\|^2_\infty \label{e:dombdd}.
	\end{gather}
	By (\ref{e:deribdd}) and the H\"{o}lder inequality, we have
	\begin{equation}\label{e:lgbdd}
	\int^{T_n}_0|\dot{\ga}_n(t)|dt\leq T_n^{\frac{1}{2}}\bigg(\int^{T_n}_0|\dot{\ga}_n(t)|^2dt\bigg)^{\frac{1}{2}}\leq 2\big(AT_n+T_n^2\|\theta\|^2_\infty\big)^{\frac{1}{2}}.
	\end{equation}

    Using the fact that $\|\theta\|_\infty<+\infty$ and (\ref{e:Tupperbdd}), we deduce that $f_\sigma(\ga_n(0))$ and the length of $\ga_n$  are bounded for all $n\in\bbN$. Then by~(\ref{e:unbdd}), there exists a compact set $K_\sigma\subset M$ (independent of $n$) such that all $\ga_n$ are entirely located in $K_\sigma$. Therefore, $(x_n,T_n)$ is a bounded sequence in $W^{1,2}(\T,M)\times\R^+$, and hence, up to considering a subsequence, one can find $(x,T)\in W^{1,2}(\T,M)\times\R^+$ such that
    \begin{gather}
    T_n\stackrel{n}{\longrightarrow} T\qquad \hbox{for some positive number}\;T\label{lim:time}\\
    x_n\stackrel{n}{\longrightarrow} x \quad\hbox{weakly in}\; W^{1,2}(\T,M)\; \hbox{and strongly in}\;L^\infty(\T,M).\label{lim:wk}
    \end{gather}

	In the following we will prove that $\{x_n\}$ converges strongly to $x$ in $W^{1,2}$. Denote $\pi(z):\R^N\to T_zM$ the orthogonal projection onto $T_zM$, and consider
	$$w_n(t)=\pi(x_n(t))[x_n(t)-x(t)].$$
	Notice that $w_n$ is bounded in $T_{x_n}W^{1,2}$ since all $x_n$ belong to the compact set $K_\sigma$ and $\pi(z)$ is smooth with respect to $z\in M$. By  (\ref{e:PSC}), taking $(\xi,\al)=(w_n,0)$ in~Lemma~\ref{lem:firstvar} arrives at
	\begin{eqnarray}\label{lim:deriative}
    \frac{1}{T_n}\int^1_0\langle\dot{x}_n, \dot{w}_n\rangle dt+\int^1_0d\theta_{x_n}(w_n,\dot{x}_n)dt+d f_\sigma (x_n(0))[w_n(0)]\stackrel{n}{\longrightarrow} 0,
	\end{eqnarray}
	where we have used $\langle\dot{x}_n, \nat w_n\rangle=\langle\dot{x}_n, \dot{w}_n\rangle$. Using $\{x_n\}\subset K_\sigma$ again, we see that $|d\theta_{x_n}|$ is bounded independently of $n$, hence by (\ref{lim:time}),  (\ref{lim:wk}) and (\ref{lim:deriative}),  we get
	\begin{equation}\label{lim:L2product}
	\int^1_0\langle\dot{x}_n, \dot{w}_n\rangle dt\stackrel{n}{\longrightarrow} 0.
	\end{equation}
	Set
	\begin{equation}\label{e:un}
	u_n(t)=\pi^{\perp}(x_n(t))[x_n(t)-x(t)]
	\end{equation}
	where $\pi^{\perp}(z)={\rm id}-\pi(z):\R^N\to T_zM$, then we have
	$$x_n-x=w_n+u_n.$$
	By (\ref{lim:L2product}), we immediately obtain
	 \begin{equation}\label{lim:L2product'}
	 \int^1_0 \big(\langle\dot{w_n}, \dot{w}_n\rangle+\langle\dot{u}_n, \dot{w}_n\rangle+\langle\dot{x}, \dot{w}_n\rangle\big)dt\stackrel{n}{\longrightarrow} 0.
	 \end{equation}
	 Since $\{\dot{w}_n\}$ converges weakly to $0$ because of (\ref{lim:wk}), it follows from (\ref{lim:L2product'}) that
	 \begin{equation}\notag
	 \int^1_0 \big(\langle\dot{w}_n, \dot{w}_n\rangle+\langle\dot{u}_n, \dot{w}_n\rangle\big)dt\stackrel{n}{\longrightarrow} 0.
	 \end{equation}
	 Observe that $\langle\dot{x}_n-\dot{x}, \dot{x}_n-\dot{x} \rangle=\langle\dot{w}_n, \dot{w}_n \rangle+\langle\dot{u}_n, \dot{u}_n \rangle+2\langle\dot{w}_n, \dot{u}_n \rangle$.
	 Therefore, to show that $\{x_n\}$ converges strongly to $x$ in $W^{1,2}$, it suffices to prove that
	 \begin{equation}\label{lim:du_nL2}
	 \dot{u}_n\stackrel{n}{\longrightarrow} 0 \quad\hbox{strongly in}\; L^2.
	 \end{equation}
	To do this, by (\ref{e:un}) we write
	\begin{equation}\label{e:du_n}
	 \dot{u}_n(t)=\pi^\perp\big(x_n(t)\big)\big[\dot{x}_n(t)-\dot{x}(t)\big]+d\pi^\perp\big(x_n(t)\big)\big[\dot{x}_n(t)\big]\big[x_n(t)-x(t)\big].
	\end{equation}
   Using (\ref{lim:wk}) and $\{x_n\}\subset K_\sigma$ we deduce that
   \begin{equation}\label{e:1thofdu_n}
   d\pi^\perp\big(x_n(t)\big)\big[\dot{x}_n(t)\big]\big[x_n(t)-x(t)\big]\stackrel{n}{\longrightarrow} 0 \quad\hbox{strongly in}\; L^2
   \end{equation}
	and
    \begin{equation}\label{e:2thofdu_n}
	\pi^\perp\big(x_n(t)\big)\big[\dot{x}_n(t)-\dot{x}(t)\big]=-\pi^\perp\big(x_n(t)\big) \big[\dot{x}(t)\big]\stackrel{n}{\longrightarrow}0, \quad\hbox{strongly in}\; L^\infty,
	\end{equation}
	where we have used $\pi^\perp\big(x(t)\big) \big[\dot{x}(t)\big]=0$ because $\dot{x}(t)\in T_{x(t)}M$ for all $t\in\T$.
	Combining (\ref{e:du_n})-(\ref{e:2thofdu_n})
	leads to (\ref{lim:du_nL2}). This completes the proof.
	
\end{proof}

\section{ Morse theory}\label{sec:morsethm}
In this section we recall some results on Morse theory which will be used in Section~\ref{sec:mainthm}.

\subsection{A Morse theorem}
Consider a $C^2$-manifold $\mathcal{M}$  modelled on a Hilbert space and $f\in C^2(\mathcal{M},\R)$. Let $x\in\mathcal{M}$ be a \emph{critical point} of $f$, meaning that
$$df(x)[v]=0\qquad \forall v\in T_x\mathcal{M},$$
where $df(x)$ is the differential of $f$ at $x$. Recall that the \emph{Hessian} of the function $f$ at $x$ is the second variational derivative of $f$ at $x$, that is to say,
if $(r,s)\mapsto\ga(r,s)\in \mathcal{M}$ is a $C^2$-map which is defined near $(0,0)\in\R^2$ such that
$$\ga(0,0)=x,\quad \frac{\partial\ga}{\partial r}(0,0)=u\in T_x\mathcal{M}\quad\hbox{and}\quad \frac{\partial\ga}{\partial s}(0,0)=v\in T_x\mathcal{M},$$
then
$${\rm Hess}f(x)[u,v]=\frac{\partial^2f(\ga(r,s))}{\partial r\partial s}\bigg|_{(0,0)}.$$

\begin{df}
	Let $x$ be a critical point of $f$. The (strict) Morse index of $x$ is the dimension of the maximal subspace of $T_xM$ where ${\rm Hess}f(x)$ is negative definite; the large Morse index of $x$ is the dimension of the maximal subspace of $T_xM$ where ${\rm Hess}f(x)$ is negative semidefinite.
\end{df}
Hereafter we denote by $m(x)$ and $m^*(x)$ the Morse index  and large Morse index of a critical point $x$ of $f$ respectively, and denote by $f^c$ and $f^c_b$ the sets $\{x\in \mathcal{M}|f(x)\leq c\}$ and $\{x\in \mathcal{M}|b\leq f(x)\leq c\}$ respectively.  If $m^*(x)=0$, we say that $x$ is a \emph{nondegenerate critical point} of $f$.

Now we cite a Morse theorem from~\cite{Vi, BeG}.

\begin{thm}[Generalized Morse theorem]\label{thm:Morsethm}
	
Let $\mathcal{M}$ be a $C^2$-Hilbert manifold	and  $f\in C^2(\mathcal{M},\R)$. Suppose that
\begin{enumerate}
	\item[{\rm (i)}] for every $(a,b)\in\R^2$ with $ a<b$, the set $\{x\in\M|a\leq f\leq b\}$ is complete;
	\item[{\rm (ii)}] $f$ satisfies the Palais-Smale condition on $f^c$ for all $c\in\mathbb{R}$;
	\item[{\rm (iii)}] $f$ is bounded from below on $\mathcal{M}$;
	\item[{\rm (iv)}] for any critical point $x$ of $f$, the linear map associated to ${\rm Hess}f(x)[\cdot,\cdot]$ owns $0$ as an isolated eigenvalue which has finite multiplicity;
	\item[{\rm (v)}]
	${\rm H}_q(\mathcal{M},\K)\neq 0$ for some integer $q\geq 0$, where ${\rm H}_q(\mathcal{M},\K)$ is the $q$-th group of singular homology with coefficient field $\K$.
	
\end{enumerate}
Then there exists a critical point $x_0$ of $f$ corresponding to the critical value
$$c=\inf\limits_{D\in \Gamma_q}\sup\limits_{x\in D}f(x)$$
which satisfies
$$0\leq m(x_0)\leq q\leq m^*(x_0),$$
where
\begin{equation}\label{e:vircycle}
\Gamma_q:=\big\{D\subset \mathcal{M}: \iota_*({\rm H}_q(D,\K))\neq 0\big\}
\end{equation}
with $\iota:D\to \mathcal{M}$ the inclusion map.
		
\end{thm}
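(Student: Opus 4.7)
The plan is to establish Theorem~\ref{thm:Morsethm} by combining the classical minimax principle with a local Morse-theoretic analysis of critical groups. The candidate critical value is $c=\inf_{D\in\Gamma_q}\sup_{x\in D} f(x)$. First I would verify that $c$ is finite: condition~(v) forces $\mathcal{M}\in\Gamma_q$ (since $\iota_*$ is then the identity on $\rH_q(\mathcal{M};\K)\neq 0$), so $\Gamma_q\neq\emptyset$, and condition~(iii) gives $c>-\infty$.

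Next I would show that $c$ is a critical value. Assuming the contrary, conditions~(i) and~(ii) allow the construction of a locally Lipschitz pseudo-gradient vector field for $f$ on the regular stratum, whose flow yields a continuous $\eta\colon\mathcal{M}\to\mathcal{M}$, homotopic to the identity, with $\eta(f^{c+\varepsilon})\subset f^{c-\varepsilon}$ for some $\varepsilon>0$. If $D\in\Gamma_q$ satisfies $\sup_D f<c+\varepsilon$, then $\eta(D)\subset f^{c-\varepsilon}$; since $\eta$ is homotopic to $\mathrm{id}_\mathcal{M}$, the induced map $\eta_*$ on $\rH_q$ is the identity, so $\eta(D)\in\Gamma_q$. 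This contradicts the definition of $c$.

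The main obstacle is the two-sided index bound $m(x_0)\leq q\leq m^*(x_0)$ at some critical point $x_0\in f^{-1}(c)$; this is where condition~(iv) is decisive. At each critical point $x\in f^{-1}(c)$, (iv) produces a Hessian-orthogonal splitting $T_x\mathcal{M}=V^-\oplus V^0\oplus V^+$ with $\dim V^0=m^*(x)-m(x)<\infty$, on which the Hessian is respectively negative definite, zero, and positive definite. The Gromoll--Meyer generalized Morse lemma then gives local coordinates in which
\begin{equation*}
f(y)\;=\;f(x)-\|y_-\|^2+\|y_+\|^2+g(y_0),\qquad y=(y_-,y_0,y_+),
\end{equation*}
for a $C^2$-function $g$ on a neighbourhood of $0\in V^0$. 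A shifting-theorem computation (excision plus retraction of the positive-definite factor followed by K\"unneth) yields $C_q(f,x;\K)\cong C_{q-m(x)}(g,0;\K)$, which vanishes as soon as either $q<m(x)$ or $q>m^*(x)$.

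Arguing by contradiction, suppose the index bound fails at every critical point at level $c$; then every local critical group in degree $q$ vanishes. Using the Palais--Smale condition to cover the compact set $f^{-1}(c)\cap\{df=0\}$ by finitely many Gromoll--Meyer neighbourhoods, the standard handle-attachment picture of Morse theory implies that the inclusion $f^{c-\varepsilon}\hookrightarrow f^{c+\varepsilon}$ induces an isomorphism on $\rH_q(\cdot;\K)$ for small $\varepsilon>0$. Composed with the pseudo-gradient flow off the critical set, this produces a deformation of any $D\in\Gamma_q$ with $\sup_D f<c+\varepsilon$ down into $f^{c-\varepsilon}$ while remaining in $\Gamma_q$, again contradicting the definition of $c$. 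I expect the hardest technical step to be precisely this last deformation/attachment argument: one must combine a pseudo-gradient flow with the local excision picture in each Gromoll--Meyer chart, and patch finitely many critical orbits simultaneously, relying crucially on the finite-dimensionality provided by~(iv) and on Palais--Smale compactness.
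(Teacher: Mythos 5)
Your minimax set-up and the deformation argument showing that $c$ is a finite critical value are fine (modulo the small point that to get $c<+\infty$ you should represent a nonzero class of $\rH_q(\mathcal{M};\K)$ by a singular cycle with compact support rather than take $D=\mathcal{M}$, since $f$ need not be bounded above). The genuine gap is in the index bound. Your argument tacitly assumes that $f^{-1}(c)\cap\mathcal{K}(f)$ consists of finitely many isolated critical points: the Gromoll--Meyer splitting, the shifting isomorphism $C_q(f,x;\K)\cong C_{q-m(x)}(g,0;\K)$, and above all the decomposition $\rH_q(f^{c+\varepsilon},f^{c-\varepsilon};\K)\cong\bigoplus_i C_q(f,x_i;\K)$ are only available for isolated critical points (or isolated critical submanifolds, with an equivariant version of the local theory). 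Hypotheses (i)--(v) give no such isolation: condition (iv) is a Fredholm-type condition on the Hessian at each critical point and does not prevent the critical set at level $c$ from being an arbitrary compact set. Indeed, in the very application this theorem serves in the paper, the functional $\mathcal{S}_k^\sigma$ is $\T$-invariant, so its critical points always come in circles and are never isolated; your step ``cover the critical set by finitely many Gromoll--Meyer neighbourhoods'' already fails there. A secondary issue: (iv) bounds the nullity $m^*(x)-m(x)$ but not $m(x)$ itself, so critical points of infinite Morse index must be disposed of separately (their degree-$q$ critical groups vanish, but this requires an argument, and the shifting theorem as you state it presupposes $m(x)<\infty$).

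The paper's route is different precisely to avoid this. Following Viterbo and Benci--Giannoni, one uses the Marino--Prodi perturbation method --- which is what the Fredholm property in (iv) is really for --- to replace $f$ near the compact set $f^{-1}(c)\cap\mathcal{K}(f)$ by a $C^2$-close function $f_\varepsilon$ whose critical points there are finitely many and nondegenerate, applies the classical (nondegenerate) Morse theorem to $f_\varepsilon$ to produce a minimax critical point of index $q$, and then lets $\varepsilon\to 0$, using the semicontinuity of $m$ and $m^*$ under $C^2$-small perturbations (the same spectral estimate as in Step 1 of the proof of Theorem~\ref{thm:Morsethm2}) to extract a critical point $x_0$ of $f$ with $m(x_0)\le q\le m^*(x_0)$. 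If you wish to keep your critical-group strategy, you must insert such a perturbation step (or an equivariant, critical-set version of the Gromoll--Meyer theory) before invoking the shifting theorem; as written, the argument does not go through.
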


The proof of the above theorem is based on the classical Morse theorem (i.e., corresponding to the case that ${\rm Hess}f(x)$ is non-degenerate) combined with a perturbation method. We will not repeat it here, and refer to ~\cite[Theorem~3.4]{BeG} for the detailed proof.

\subsection{Critical groups and $g$-critical sets}\label{subsec:Cr}
Let $\M$ be a $C^2$-Hilbert-Riemannian manifold and $f\in C^1(\M, \R)$ which satisfies the Palais-Smale condition. Denote by $\mathcal{K}(f)$ the set of critical points of $f$. If $f\in C^2(\M, \R)$, for $g\in\bbN\cup\{0\}$, the set $\mathcal{K}_g(f)$ of all critical points of $f$ whose large Morse indices are not less than $g$ is called \emph{$g$-critical set} of $f$,
$$\mathcal{K}_g(f):=\big\{x\in\mathcal{K}(f)\big|m^*(x)\geq g\big\}.$$
If $N\subseteq \mathcal{K}(f)$ is a closed connected submanifold  of $\M$  and $f|_N=\hbox{constant}$, we say that $N$ is a \emph{critical submanifold} of $f$. Let $N\subseteq\M$ be an isolated critical submanifold of $f$ with $f|_N=c$, and let $U$ be a neighborhood of $N$ such that $U\cap \mathcal{K}(f)=N$.
For every $q\in\bbN\cup\{0\}$, the $q$-th critical group with coefficient field $\K$ of $f$ at $N$ is defined by
$$C_q(f,N;\K):=\hbox{H}_q(U\cap f^c,U\cap (f^c\setminus N);\K),$$
where $\hbox{H}_*(X,Y;\K)$ stands for the relative singular homology with coefficient field $\K$. For more background on critical groups, we refer the reader to \cite[Chapter I]{Cha} and references therein. If $x$ is an isolated critical point of $f$ with finite Morse index and large Morse index, then
\begin{equation}\label{e:Crvanish}
C_q(f,x;\K)=0\quad \hbox{for}\quad q\notin \big[m(x),m^*(x)\big],
\end{equation}
see~\cite[Theorem~I.5.1]{Cha}.

\begin{lem}\label{lem:Crgroup}
	Let $f\in C^2(\M,\R)$ which satisfies the Palais-Smale condition on $f^b_a$ with $a<b$. Suppose that $c\in (a,b)$ is an isolated critical value of $f$ in  $[a,b]$  such that (i) $f^{-1}(c)\cap \mathcal{K}(f)=\cup_{i=1}^rN_i$, where
	every $N_i$ is an isolated critical submanifold of $\M$; (ii) the gradient flow of $f$ near every $N_i$ preserves the fibers of the normal bundle of every $N_i$ (which is identified with a tubular neighbourhood of $N_i$ in $\M$). Then
	$$\hbox{H}_q(f^{b},f^{a};\K)=\bigoplus_{i=1}^rC_q(f, N_i;\K)\quad\hbox{for all}\;q\in\bbN\cup\{0\}.$$
\end{lem}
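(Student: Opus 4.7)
The plan is to prove this by the standard combination of the deformation lemma and excision. First I would reduce to a narrow band around $c$. Since $c$ is the only critical value of $f$ in $[a,b]$, one may choose $\epsilon>0$ so small that $[c-\epsilon,c+\epsilon]\subset(a,b)$ contains $c$ as its only critical value. The Palais-Smale hypothesis on $f^b_a$ then supplies, via the standard negative pseudo-gradient deformation, strong deformation retractions $f^a\hookrightarrow f^{c-\epsilon}$ and $f^{c+\epsilon}\hookrightarrow f^b$ (see \cite[Ch.~I]{Cha}), giving
$$\hbox{H}_q(f^b,f^a;\K)\;\cong\;\hbox{H}_q(f^{c+\epsilon},f^{c-\epsilon};\K).$$

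Next I would localize to disjoint neighborhoods of the $N_i$ and identify the local summands with critical groups. Choose pairwise disjoint open neighborhoods $U_1,\ldots,U_r\subset\M$ with $U_i\cap\mathcal{K}(f)=N_i$. On the closed band $f^{-1}([c-\epsilon,c+\epsilon])\setminus\bigcup_i U_i$ the Palais-Smale condition yields a uniform lower bound $\|df\|\ge\delta>0$: otherwise a PS sequence there would converge to a critical point at level $c$ lying outside $\bigcup_i N_i$, contradicting the choice of the $U_i$. A cut-off negative pseudo-gradient flow then deformation-retracts $f^{c+\epsilon}$ onto $f^{c-\epsilon}\cup\bigcup_i(U_i\cap f^{c+\epsilon})$, and excision along the disjoint $U_i$ gives
$$\hbox{H}_q(f^{c+\epsilon},f^{c-\epsilon};\K)\;\cong\;\bigoplus_{i=1}^r\hbox{H}_q(U_i\cap f^{c+\epsilon},U_i\cap f^{c-\epsilon};\K).$$
Inside each $U_i$, where $N_i$ is the only critical set of $f$, a further cut-off pseudo-gradient deformation identifies the pair $(U_i\cap f^{c+\epsilon},U_i\cap f^{c-\epsilon})$ with $(U_i\cap f^c,U_i\cap(f^c\setminus N_i))$, whose $q$-th relative singular homology is by definition $C_q(f,N_i;\K)$. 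Assembling the three isomorphisms gives the claim; see \cite[Ch.~I,\S5]{Cha} for the last equivalence.

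The main obstacle is constructing the pseudo-gradient deformation on the band $f^{-1}([c-\epsilon,c+\epsilon])$ that respects the partition into the disjoint $U_i$ and simultaneously drives the complement of $\bigcup_i U_i$ below $c-\epsilon$. This requires a careful cut-off of the pseudo-gradient vector field near $\partial U_i$, the uniform lower bound $\|df\|\ge\delta$ outside $\bigcup_i U_i$, and completeness of the resulting flow---all standard in Morse theory on Hilbert manifolds, but relying essentially on the Palais-Smale hypothesis and on the isolation of the critical submanifolds $N_i$.
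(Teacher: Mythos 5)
Your proposal is correct and follows exactly the route the paper indicates: the paper's entire proof is the one-line remark that the lemma follows from the deformation theorem for a pseudo-gradient vector field together with excision, citing \cite[Theorem~I.4.2]{Cha}. Your write-up simply fills in the standard details of that same argument (deformation to the band $[c-\epsilon,c+\epsilon]$, localization to disjoint neighborhoods of the $N_i$, excision, and identification with the critical groups), so there is nothing to add.
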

This lemma can be proven, following the proof of ~\cite[Theorem~2.1]{Wa} or~\cite[Theorem~I.4.2]{Cha} without substantial modifications, by the deformation theorem for a pseudo gradient vector field and the excision theorem of the singular homology theory. For the sake of completeness, we include the proof of this lemma in the Appendix~\ref{app:lem9}
by nearly repeating those proofs in the mentioned references.

\begin{thm}\label{thm:Morsethm2} Let $\mathcal{M}$ be a $C^2$-Hilbert manifold	and  $f\in C^2(\mathcal{M},\R)$.
In addition to  the  assumptions (i) -- (iv) in Theorem~\ref{thm:Morsethm},  assume that for $b\in\R$ and some $q_0\in\bbN$,
\begin{equation}\label{e:indexcondition}
f^b\cap \mathcal{K}_{q_0}(f)=\bigcup\limits_{i=1}^rN_i\subseteq \big\{x\in\M\big|f< b\big\},
\end{equation}
where all $N_i$, $i=1,\ldots,r$ are isolated critical submanifolds and satisfy the condition~(ii) of Lemma~\ref{lem:Crgroup}. Then if $\delta>0$ is small enough, there exists a sufficiently small $\eps>0$ and a function $f_\eps\in C^2(\M,\R)$ such that $\|f-f_\eps\|_{C^2(\M,\R)}<\eps$ and
\begin{equation}\label{e:hominequality}
\hbox{dim}\;{\rm H}_q\big((f_\eps)^{b-\delta-\eps};\K\big)\leq\sum\limits_{i=1}^{r}\hbox{dim}\;C_q(f,N_i;\K)\qquad \hbox{for all $q\geq q_0$}.
\end{equation}

\end{thm}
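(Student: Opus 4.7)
The plan is to combine a Marino--Prodi perturbation of $f$ near the $N_i$'s with an iterated use of Lemma~\ref{lem:Crgroup} along the sublevel filtration of the perturbed function. The guiding idea is that only the ``heavy'' critical sets $N_1,\ldots,N_r$ can contribute to sublevel homology in degree $q \geq q_0$: every other critical point $x$ of $f$ lying in $f^b$ satisfies $m^*(x) < q_0$ and therefore $C_q(f,x;\mathbb{K}) = 0$ by~\eqref{e:Crvanish}.

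First I pick pairwise disjoint open neighborhoods $U_1,\ldots,U_r$ of $N_1,\ldots,N_r$ whose closures are mutually disjoint and contain no other critical points of $f$. The Marino--Prodi perturbation theorem then produces, for every small enough $\epsilon>0$, a function $f_\epsilon\in C^2(\M,\R)$ with $\|f-f_\epsilon\|_{C^2} < \epsilon$ that agrees with $f$ outside $\bigcup U_i$ and has only finitely many non-degenerate critical points in each $U_i$, all with $f_\epsilon$-values in a small interval about $c_i := f(N_i)$. Every other critical point of $f_\epsilon$ is a critical point of $f$ outside $\bigcup U_i$, and those lying in $f^b$ have $m^*(x) < q_0$, hence $C_q(f_\epsilon,x;\mathbb{K}) = 0$ for $q \geq q_0$.

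Next I analyze the sublevel set $(f_\epsilon)^{b-\epsilon}$ by sweeping from below. Writing $c_{\max} := \max_i c_i < b$ and fixing $\delta \in (0,(b-c_{\max})/4)$, I arrange that for $\epsilon$ small one has $c_{\max} + \delta < b-\epsilon$ and that the perturbed critical values in each $U_i$ lie in a narrow band $[c_i - \delta_i, c_i + \delta_i]$ about $c_i$, these bands being pairwise disjoint and disjoint from every critical value of $f$ outside $\bigcup U_j$. Iterating Lemma~\ref{lem:Crgroup} over the ``light'' critical values of $f_\epsilon$ below $\min_i c_i - \delta$ (which contribute $0$ to the relative $\mathrm{H}_q$ for $q \geq q_0$) and starting from the empty sublevel (permissible since $f_\epsilon$ is bounded below) yields $\mathrm{H}_q((f_\epsilon)^{\min_i c_i - \delta};\mathbb{K}) = 0$ for $q \geq q_0$. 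The identical argument on $(c_{\max}+\delta,\, b-\epsilon)$ gives $\mathrm{H}_q((f_\epsilon)^{b-\epsilon};\mathbb{K}) \cong \mathrm{H}_q((f_\epsilon)^{c_{\max}+\delta};\mathbb{K})$ for $q \geq q_0$. Finally, excision together with the $C^2$-stability of the relative sublevel-pair homology yields, for each $i$,
\begin{equation*}
\mathrm{H}_q\bigl((f_\epsilon)^{c_i + \delta_i},\,(f_\epsilon)^{c_i - \delta_i};\,\mathbb{K}\bigr) \cong \mathrm{H}_q\bigl(U_i \cap f^{c_i + \delta_i},\,U_i \cap f^{c_i - \delta_i};\,\mathbb{K}\bigr) = C_q(f, N_i;\,\mathbb{K}),
\end{equation*}
so iterating the long exact sequence of pairs across the bands of all $N_i$'s and the ``light'' gaps in between produces $\dim \mathrm{H}_q((f_\epsilon)^{b-\epsilon};\mathbb{K}) \leq \sum_{i=1}^r \dim C_q(f, N_i;\mathbb{K})$, as desired.

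The main obstacle is the $C^2$-stability identity in the displayed equation: it asserts that the relative sublevel-pair homology on $U_i$ is unchanged in passing from $f$ to the Marino--Prodi perturbation $f_\epsilon$. This requires a gradient-flow deformation matching the level sets $(f_\epsilon)^{-1}(c_i \pm \delta_i)$ with $f^{-1}(c_i \pm \delta_i)$ on $\partial U_i$ (where $f_\epsilon \equiv f$), combined with the Marino--Prodi control of the critical structure inside $U_i$ to deformation retract onto the unperturbed pair; together with verifying that $(c_i \pm \delta_i)$ are regular values of both $f$ and $f_\epsilon$ on $\partial U_i$ and that the Palais--Smale condition (ii) persists uniformly under the $C^2$-small perturbation.
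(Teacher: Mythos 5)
Your overall skeleton (perturb, slice the sublevel filtration into bands, use Lemma~\ref{lem:Crgroup} and subadditivity over triples, kill the low-index contributions in degree $q\geq q_0$) matches the paper's Step~2. But you apply the Marino--Prodi perturbation to the wrong part of the critical set, and this creates a genuine gap. You perturb \emph{inside} the neighborhoods $U_i$ of the $N_i$, breaking each $N_i$ into nondegenerate points, while leaving $f$ untouched on the complement. The problem is the set of ``light'' critical points, i.e.\ $\mathcal{K}(f)\cap f^b\setminus\bigcup_i N_i$: the hypotheses only say these have $m^*<q_0$; they are not assumed to be isolated, to form submanifolds, or to have finitely many critical values. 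Your sweep ``over the light critical values of $f_\epsilon$'' invokes Lemma~\ref{lem:Crgroup} (which requires the critical set at each level to be a finite union of isolated critical manifolds) and the vanishing~\eqref{e:Crvanish} (which requires an isolated critical point), and neither is available for these unperturbed points. The paper's proof does the opposite: it keeps $f_\epsilon\equiv f$ near each $N_i$ (so $C_q(f_\epsilon,N_i;\K)=C_q(f,N_i;\K)$ holds by definition, with no stability argument needed) and uses Marino--Prodi precisely to resolve the \emph{other} critical points in $f^{b_2}$ into finitely many nondegenerate ones $x_1^\eps,\ldots,x_l^\eps$; it then needs a separate Step~1 showing, via the upper semicontinuity of $m^*$ under $C^2$-small perturbations (using that $0$ is an isolated eigenvalue of finite multiplicity of the Hessian), that each $x_i^\eps$ still has index $<q_0$, so its degree-$q$ critical group vanishes.

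A secondary consequence of your choice is the ``main obstacle'' you flag yourself: you must prove that $\mathrm{H}_q\bigl((f_\epsilon)^{c_i+\delta_i},(f_\epsilon)^{c_i-\delta_i};\K\bigr)\cong C_q(f,N_i;\K)$ after the $N_i$ have been destroyed by the perturbation. This stability of local homological invariants is not free (and in general one only controls it with additional arguments about non-crossing of critical values and uniform Palais--Smale for the perturbed family); the paper's arrangement makes it a tautology. So the fix is to swap the roles: leave $f$ alone on a small neighborhood of $\bigcup_i N_i$, perturb away from it to render the remaining critical points in the sublevel nondegenerate and finite, and add the index-semicontinuity step before running your filtration argument.
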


\begin{proof}
	Set $f|_{N_i}=c_i,\;i=1\ldots,r$.
Since all $N_i$ are contained in $\{x\in\M\big|f< b\}$,
$$ b_1:=\max_i c_i<b.$$
By (i) and (ii), the set $\mathcal{K}(f)\cap f^{b}$ is compact. By (iv) the linear map associated to $\hbox{Hess} f(x)$ is a Fredholm operator of index $0$ for all $x\in\mathcal{K}(f)$. Given $b_2\in(b_1,b)$, following the perturbation methods of Marino and Prodi in~\cite{MP}, for every sufficiently small $\epsilon>0$ there exists $f_\epsilon\in C^2(\M,\R)$ and $\eta(\epsilon),\beta(\epsilon)\in\R$
such that the following holds:
\begin{itemize}
	\item[(a)] $\eta(\epsilon)\searrow0$\;
	 and\; $\beta(\epsilon)\searrow b_2$ as $\eps\to 0$,
	\item[(b)] $f_\epsilon(x)=f(x)$ for all $x\in f^{b_2}$ such that either $dist(x,\mathcal{K}(f)\setminus\cup_{i=1}^rN_i)\geq \eta(\epsilon)$ or $dist(x,\cup_{i=1}^rN_i)\leq\eta(\epsilon)$.
	\item[(c)] the critical points of $f_\epsilon$ lying on $f^{b_2}\setminus\cup_{i=1}^rN_i$ are nondegenerate, and the number of these critical points are finite, denoted by $x_1^\epsilon,\ldots,x_l^\epsilon$ with $l=l(\epsilon)$,
	\item[(d)] $|f(x)-f_\epsilon(x)|+\|df(x)-df_\epsilon(x)\|+\|d^2f(x)-d^2f_\epsilon(x)\|\leq\epsilon$\quad $\forall x\in f^{b_2}$,
	\item[(e)] $f_\epsilon$ satisfies the Palais-Smale condition on $\{x\in\M| f_\epsilon\leq \beta(\epsilon)\}$.
	
\end{itemize}
Clearly, \begin{equation}\label{e:cr}
\mathcal{K}(f_\eps)\cap f^{b_2}=\cup_{i=1}^rN_i\cup\{x^\eps_1,\ldots,x^\eps_l\}.
\end{equation}
Now we proceed in two steps. \\
\noindent {\bf Step 1.} We prove that for sufficiently small $\epsilon>0$ ,
	\begin{equation}\label{e:Lmorseindex}
	m(x_i^\eps)=m^*(x_i^\eps)<q_0\quad\hbox{for all}\;i=1,\ldots,l.
	\end{equation}
In fact, for sufficiently small $\epsilon>0$ each $x_i^\eps$ must belong to a set of critical points into which some critical point $y_i\in f^{b_2}$ of $f$ splits by a small perturbation, and hence $dist(x^\eps_i,y_i)\to 0$ as $\epsilon\to 0$. By (\ref{e:indexcondition}) and term~(b) we have $m^*(y_i)<q_0$.

For simplicity we only prove~(\ref{e:Lmorseindex}) for $i=1$ since the proofs  for $i\in\{2,\ldots,l\}$ are similar.

Let $(U,\varphi)$ be a local chart of $\M$ at $y_1$, $V=\varphi(U)\subset \mathbb{E}$ (the Hilbert space where $\M$ is modelled) such that $x^\eps_1\in U$ for $\eps$ small enough . Set
$\tilde{f}:=f\circ \varphi^{-1}$ and $\tilde{f_\eps}:=f_\eps\circ \varphi^{-1}$. Then $\varphi(x_1^\eps)$ and $\varphi(y_1)$ are critical points of $\tilde{f}$ and $\tilde{f_\eps}$ respectively, and satisfy
\begin{equation}\label{e:indexequ}
m(y_1)=\tilde{m}(\varphi(y_1)),\quad m^*(y_1)=\tilde{m}^*(\varphi(y_1)),\quad m(x_1^\eps)=\tilde{m}(\varphi(x_1^\eps)),
\end{equation}
where $\tilde{m}(\varphi(y_1))$ and $\tilde{m}^*(\varphi(y_1))$ are the strict and large Morse index for $\tilde{f}$ at $\varphi(y_1)$ respectively, while $\tilde{m}(\varphi(x_1^\eps))$ is the Morse index of $\tilde{f_\eps}$ at $\varphi(x_1^\eps)$.

We define $L$ and $L_\eps$ as the linear maps associated to the bilinear forms $\hbox{Hess}\tilde{f}(\varphi(y_1))$ and $\hbox{Hess}\tilde{f_\eps}(\varphi(x^\eps_1))$ respectively. From term~(d) we see that
\begin{equation}\label{e:Lnorm}
\|L-L_\eps\|_{\mathcal{L}(\mathbb{E},\mathbb{E})}\stackrel{\eps}{\longrightarrow} 0.
\end{equation}
Here $\mathcal{L}(\mathbb{E},\mathbb{E})$ is the space of the bounded linear maps from $\mathbb{E}$ to  $\mathbb{E}$.

By  assumption~(iv) in Theorem~\ref{thm:Morsethm}, the operator $L$ possesses a smallest positive eigenvalue, denoted by $\mu_+$. Take $\delta>0$ such that
\begin{equation}\label{e:pos}
\mu_+-\delta>0.
\end{equation}
By~(\ref{e:Lnorm}), for sufficiently small $\eps$ we have
\begin{equation}\label{e:Lnorm1}
\|L-L_\eps\|_{\mathcal{L}(\mathbb{E},\mathbb{E})}<\delta.
\end{equation}

Denote by $\mathbb{E}_+$ the subspace of $\mathbb{E}$ on which $L$ is positive definite. Clearly, the quotient space $\mathbb{E}/\mathbb{E}_+$ has dimension $m^*(y_1)$.
Denote by $\|\cdot\|$ the norm of $\mathbb{E}$ and by $\langle\cdot,\cdot\rangle$ its scalar product. Now for sufficiently small $\eps$ we have
\begin{eqnarray}
\inf\big\{\langle L_\eps v,v\rangle\big|v\in\mathbb{E}_+,\;\|v\|=1\big\}&\geq& \inf\big\{\langle L v,v\rangle\big|v\in\mathbb{E}_+,\;\|v\|=1\big\}\notag\\
&&+\inf\big\{\langle L_\eps v-L v,v\rangle\big|v\in\mathbb{E}_+,\;\|v\|=1\big\}\notag\\
&\geq&\mu_+-\|L-L_\eps\|_{\mathcal{L}(\mathbb{E},\mathbb{E})}>\mu_+-\delta>0\notag
\end{eqnarray}
which imply
$$\tilde{m}^*(\varphi(x_1^\eps))\leq \tilde{m}^*(\varphi(y_1)).$$
So by (\ref{e:indexequ}) we have
$$m^*(x_1^\eps)\leq m^*(y_1)$$
provided that $\eps>0$ is small enough. This completes the proof of (\ref{e:Lmorseindex}).

	\noindent {\bf Step 2.} We prove that for every $b_2\in(b_1,b)$, one can choose $0<\eps<(b_2-b_1)/2$ small enough such that
	$$\hbox{dim}\;{\rm H}_q\big((f_\eps)^{b_2-\eps};\K\big)\leq\sum\limits_{i=1}^{r}\hbox{dim}\;C_q(f,N_i;\K).$$
	
	Note that for $0<\eps<(b_2-b_1)/2$, we have
	$$f^{b_1}\subseteq (f_\eps)^{b_2-\eps}\subseteq f^{b_2}\cap  (f_\eps)^{\beta(\eps)}.$$
	It follows from~(\ref{e:cr}) that one can choose  sufficiently small $\eps>0$ such that
$$\mathcal{K}(f_\eps)\cap (f_\eps)^{b_2-\eps}=\cup_{i=1}^rN_i\cup\{z_1,\ldots,z_k\}\subseteq \{x\in\M|f_\eps<b_2-\eps\},$$
where $\{z_1,\ldots,z_k\}$ is a subset of $\{x^\eps_1,\ldots,x^\eps_l\}$ with $k\leq l$.
Consequently, there are finitely many critical values of $f_\eps$ on $ (f_\eps)^{b_2-\eps}$, saying $\ga_1,\ldots,\ga_h$. Take $-\infty <e_0<e_1<\cdots<e_h=b_2-\eps$ such that $\ga_j\in(e_{j-1},e_j)$, $j=1,\ldots, h$.

Recall that for a triple $Z\subseteq Y\subseteq X$ of topological spaces we have
\begin{equation}\label{e:trigonoineq}
\hbox{dim\;H}_q(X,Z;\K)\leq \hbox{dim\;H}_q(X,Y;\K)+\hbox{dim\;H}_q(Y,Z;\K).
\end{equation}

Observe that item~(e) implies that $f_\eps$ satisfies the Palais-Smale condition on a small open neighborhood of $ (f_\eps)^{b_2-\eps}$.
So we have
\begin{eqnarray}
\hbox{dim\;H}_q\big((f_\eps)^{b_2-\eps};\K\big)&=&\hbox{dim\;H}_q\big((f_\eps)^{e_h},(f_\eps)^{e_0};\K\big)\\
&\leq&
\sum_{j=1}^h \hbox{dim\;H}_q\big((f_\eps)^{e_j},(f_\eps)^{e_{j-1}};\K\big)\notag\\
&\leq&\sum_{j=1}^r \hbox{dim\;}C_q(f_\eps,N_j;\K)+\sum_{j=1}^k \hbox{dim\;}C_q(f_\eps,z_j;\K), \label{e:CrNz}
\end{eqnarray}
where in the second inequality we have used Lemma~\ref{lem:Crgroup} since item~(b) implies that $f_\eps=f$ near a small open neighborhood of $N_j,j=1,\ldots, r$. By Step 1., each
critical point $z_j$ has Morse index less than $q_0$.
Then from~(\ref{e:Crvanish}) and (\ref{e:CrNz}) we deduce that
for any $q\geq q_0$
$$
\hbox{dim\;H}_q\big((f_\eps)^{b_2-\eps};\K\big)
\leq\sum_{j=1}^r \hbox{dim\;}C_q(f_\eps,N_j;\K)=\sum_{j=1}^r \hbox{dim\;}C_q(f,N_j;\K).
$$

Finally, choosing $b_2$ sufficiently close to $b$ and $\eps>0$ sufficiently small,  the desired inequality~(\ref{e:hominequality}) follows
immediately.
	
\end{proof}

\section{The Morse index and its iteration inequalities }\label{sec:index}

\subsection{The Morse index}
Denote by
$$m(x,T)\quad\hbox{and}\quad m^0(x,T)$$
the \emph{Morse index} and \emph{nullity} of a critical point $(x,T)$ of $\Sk$.

Consider the fixed period action functional
$$\Sk^T:W^{1,2}(\R/\Z,M)\times \{T\}\longrightarrow \R,\quad x\longmapsto \Sk(x,T),$$
or equivalently this functional is defined as
$$\Sk^T:W^{1,2}(\R/T\Z,M)\longrightarrow \R,\quad \ga\longmapsto S_{L+k}(\ga).$$

The Morse index and nullity of $\Sk^T$ are denoted by $m_T(x)$ and $m^0_T(x)$ respectively. It is a classical result that these two indices are finite since the corresponding Lagrangian $L+k$ is convex, i.e., $L_{vv}>0$.  Clearly, we have that
\begin{equation}\label{e:indexrelation}
0\leq m(x,T)-m_T(x)\leq 1.
\end{equation}
We refer the reader to~\cite{MeP} for the precise relationship between these two indices.

Let $L:TM\to\R$ be a convex superlinear Lagrangian, which means that for every $(x,v)\in TM$, the second derivative along the fibers $L_{vv}(x,v)$ is uniformly positive definite, namely, in linear coordinates on the fiber $T_xM$, there is a $A>0$ such that
	$$w\cdot L_{vv}(x,v)\cdot w\geq A|w|^2\quad \forall (x,v)\in TM\;\hbox{and}\;w\in T_xM,$$
	and for every $R\geq0$ there is a constant $C(R)\geq 0$ such that
	$$L(x,v)\geq R|v|-C(R).$$

Let $H:T^*M\to\R$ be the dual Hamiltonian of $L$, that is,
 $$ H(x,p):=p\big(\mathcal{L}^{-1}(x,p)\big)-L(\mathcal{L}^{-1}(x,p)\big),$$
where $\mathcal{L}:TM\to T^*M$ is the Legendre transform given by $\mathcal{L}(x,v)=\big(x,\partial_vL(x,v)\big)$.
Denote by $X_H$ the Hamiltonian vector field on the cotangent bundle $T^*M$ given by $\omega(X_H,\cdot)=-dH$, where $\omega$ is the canonical symplectic structure of $T^*M$. For a critical point $(x,T)$ of $\Sk$, we define
$$z(t):=\mathcal{L}\big(\ga(t),\dot{\ga}(t)\big),$$
where $\ga(t)=x(t/T)$ is the corresponding $T$-periodic solution of the Euler-Lagrange equation associated to $L$. Set $z_0:=z(0)$. The differential of the Hamiltonian flow $d\phi_H^T(z_0)$ has the eigenvector $X_H(z_0)$ with eigenvalue $1$. Clearly, $T_{z_0}\big(H^{-1}(0)\big)$
is invariant with respect to the linear map $d\phi_H^T(z_0)$. Take a symplectic basis $e_1, f_1, \ldots, e_m, f_m$ in $T_{z_0}T^*M$ which satisfies $e_1=X_H(z_0)$ and
$$T_{z_0}\big(H^{-1}(0)\big)=\hbox{span}\big\{e_1, e_2, f_2, \ldots, e_m, f_m\big\}.$$
Under the above symplectic basis, the linear map $d\phi_H^T(z_0)$ has the matrix representation
\[
d\phi_H^T(z_0)
=
\left(
\begin{array}{cc|ccc}
1 & * & * &\cdots & *\\
0 & 1 & 0 & \cdots & 0 \\\hline
0 & * \\
\vdots & \vdots & & P \\
0 & *
\end{array} \right)
\]
where $P\in\mathrm{Sp}(2m-2)$ is the Poincar\'e return map (see, e.g.~\cite{Lo}) of $\phi_H^T$ at $z_0$, and the entries marked by $*$ stand for some real numbers. It is clear that the kernel of the second differential $d^2\Sk$ contains the vector $(\dot{x},0)$.  A careful calculation shows that
\begin{equation}\label{e:kernel2thSk}
\frac{\hbox{ker}\;d^2\Sk(x,T)}{\hbox{span}\{(\dot{x},0)\}}\cong \hbox{ker}\;(I-P).
\end{equation}
 For a proof of the above equality we refer to~\cite[Proposition~A.3]{AMP}.

The nullity of the fixed period action functional $\Sk^T$ at $\ga$ and the dimension of the kernel of the linear map $I-d\phi_H^T(z_0)$ has the relation
\begin{equation}\label{e:kernel2thSkT}
m^0_T(x)=\hbox{dim}\;\hbox{ker} \big(I-d\phi_H^T(z_0)\big).
\end{equation}
For a proof of this fact we refer the reader to~\cite{Vi1,LoA}.
From (\ref{e:kernel2thSk}) and (\ref{e:kernel2thSkT}) we deduce that
\begin{equation}\label{e:nullrelation}
m^0_T(x)-1\leq m^0(x,T)\leq m^0_T(x)\leq 2\hbox{dim}(M).
\end{equation}

\subsection{Iteration inequalities}

By our convention in Section~\ref{sec:1}, the iteration $\ga^n\in W^{1,2}(\R/nT\Z,M)$ of an element $\ga\in W^{1,2}(\R/T\Z,M)$  gives a smooth action of the multiplicative group $\bbN$ on $\M$ by
\begin{equation}\label{e:iterate}
\Theta:\bbN\times \M\longrightarrow \M,\quad \big(n,(x,T)\big)=(n,\ga)\longmapsto \Theta^n(\ga):=\ga^n=(x^n,nT),
\end{equation}
where $x^n(t)=x(nt)$.

The \emph{mean index} of $\Sk$ is defined as
$$\hat{m}(x,T):=\lim\limits_{n\to \infty}\frac{m(x^n,nT)}{n}.$$
From (\ref{e:indexrelation}) one can see that $\hat{m}(x,T)$ is well defined and coincides with the classical mean index with respect to the fixed action functional, i.e.,
\begin{equation}\label{e:index=}
\hat{m}(x,T)=\hat{m}_T(x):=\lim\limits_{n\to \infty}\frac{m_{nT}(x^n)}{n}.
\end{equation}

For a critical point $(x,T)$ of  $\Sk^T$, we have the iteration inequalities of the Morse index:
\begin{equation}\label{e:indexestimate1}
n\hat{m}_T(x)-\hbox{dim}(M)\leq m_{nT}(x^n)\leq n\hat{m}_T(x)+\hbox{dim}(M)-m_T^0(x^n)
\end{equation}
provided that the pull back bundle $\gamma^*TM\to \R/T\Z$ with $\gamma(t)=x(t/T)$ is trivial.
Since $(\gamma^2)^*TM\to \R/T\Z$  is always trivial, we have that
\begin{equation}\label{e:indexestimate2}
n\hat{m}_{2T}(x^2)-\hbox{dim}(M)\leq m_{2nT}(x^{2n})\leq n\hat{m}_{2T}(x^2)+\hbox{dim}(M)-m_{2T}^0(x^{2n}).
\end{equation}
For a detailed proof of (\ref{e:indexestimate1}) we refer to~\cite{Bo}, see also~\cite{LL,Lo,Lu1}.

Combining (\ref{e:indexrelation}), (\ref{e:nullrelation}), (\ref{e:index=}) and (\ref{e:indexestimate1}) implies the following.
\begin{lem}
	Let $\ga=(x,T)$ be a critical point of $\Sk$ such that the pull back bundle $\gamma^*TM\to \R/T\Z$ is trivial. Then
	\begin{equation}\label{e:iterationineq1}
	n\hat{m}(x,T)-\hbox{dim}(M)\leq m(x^n,nT)\leq n\hat{m}(x,T)+\hbox{dim}(M)-m^0(x^n,nT)+1
	\end{equation}
	Consequently, for any critical point of $\Sk$ it holds that
	\begin{equation}\label{e:iterationineq2}
	n\hat{m}(x^2,2T)-\hbox{dim}(M)\leq m(x^{2n},2nT)\leq n\hat{m}(x^2,2T)+\hbox{dim}(M)-m^0(x^{2n},2nT)+1.
	\end{equation}
	
\end{lem}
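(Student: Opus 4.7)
My plan is to deduce both inequalities by combining the already-stated fixed-period iteration bound $(\ref{e:indexestimate1})$ with the comparison relations $(\ref{e:indexrelation})$ and $(\ref{e:nullrelation})$ between the free-period index/nullity of $\Sk$ and the fixed-period index/nullity of $\Sk^T$, together with the identity $\hat{m}(x,T)=\hat{m}_T(x)$ recorded in $(\ref{e:index=})$. The heart of the argument, namely Long's iteration formula for the fixed-period index, is already imported from the cited references, so all that remains is a bookkeeping argument.

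For the lower bound in $(\ref{e:iterationineq1})$, $(\ref{e:indexrelation})$ applied to the critical point $(x^n,nT)$ of $\Sk$ gives $m(x^n,nT)\geq m_{nT}(x^n)$. The left-hand inequality of $(\ref{e:indexestimate1})$ (which we may invoke because the pull-back bundle $\gamma^*TM\to \mathbb{R}/T\mathbb{Z}$ is trivial, hence so is $(\gamma^n)^*TM$) yields $m_{nT}(x^n)\geq n\hat{m}_T(x)-\dim(M)$, and then $(\ref{e:index=})$ replaces $\hat{m}_T(x)$ by $\hat{m}(x,T)$.

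For the upper bound, $(\ref{e:indexrelation})$ gives $m(x^n,nT)\leq m_{nT}(x^n)+1$. The right-hand inequality of $(\ref{e:indexestimate1})$ then produces
\[
m(x^n,nT)\leq n\hat{m}_T(x)+\dim(M)-m_{nT}^0(x^n)+1.
\]
Finally, $(\ref{e:nullrelation})$ says $m^0(x^n,nT)\leq m_{nT}^0(x^n)$, so $-m_{nT}^0(x^n)\leq -m^0(x^n,nT)$, and using $(\ref{e:index=})$ once more we obtain the stated upper bound $n\hat{m}(x,T)+\dim(M)-m^0(x^n,nT)+1$.

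For the second inequality $(\ref{e:iterationineq2})$, the point is that the triviality hypothesis on $\gamma^*TM$ can be dropped by passing to the double cover: $(\gamma^2)^*TM\to \mathbb{R}/2T\mathbb{Z}$ is always trivial (even when $\gamma^*TM$ is not, as any orientation-reversing loop becomes orientation-preserving after being traversed twice). Hence we may apply $(\ref{e:iterationineq1})$ directly to the critical point $(x^2,2T)$ of $\Sk$ in place of $(x,T)$, which gives $(\ref{e:iterationineq2})$ verbatim. There is no real obstacle: the only nontrivial ingredient is the fixed-period iteration formula $(\ref{e:indexestimate1})$, which has been cited from the literature, and the rest is pure rewriting using the three relations connecting the free- and fixed-period Morse data.
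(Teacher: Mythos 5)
Your proposal is correct and follows exactly the route the paper intends: the paper's entire proof is the single sentence ``Combining (\ref{e:indexrelation}), (\ref{e:nullrelation}), (\ref{e:index=}) and (\ref{e:indexestimate1}) implies the following,'' and your bookkeeping (lower bound via $m(x^n,nT)\geq m_{nT}(x^n)$, upper bound via $m(x^n,nT)\leq m_{nT}(x^n)+1$ and $m^0(x^n,nT)\leq m^0_{nT}(x^n)$, then passing to $\gamma^2$ for the second inequality) fills in precisely those steps.
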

Hereafter by a slight abuse of notation if $(x,T)$ is a critical point of $\Sk$ with $\ga(t)=x(t/T)$ we think of $m(\ga)$, $m^0(\ga)$ and $\hat{m}(\ga)$ as $m(x,T)$, $m^0(x,T)$ and $\hat{m}(x,T)$ respectively.

\section{Proof of the main theorem}\label{sec:mainthm}

\subsection{Proof of Theorem~\ref{thm: mainresult1}}

In this subsection, we will apply the generalized Morse theorem in Section~\ref{sec:morsethm} to $\Sk^\sigma$ on $\M_\al$ to prove our main theorem. To this end, we need to check assumptions (i)--(v) of Theorem~\ref{thm:Morsethm}.

First of all, by Lemma~\ref{lem:bddaf0} and Proposition~\ref{prop:PSc},  $\Sk^\sigma$ satisfies (i) and (ii) for every $\sigma\in\bbN$. The boundedness of $\Sk^\sigma$ from below  can be easily seen from
\begin{equation}\label{e:bddbelow}
\Sk^\sigma(x,T)=\mathcal{S}_{c_u(L)}(x,T)+(k-c_u(L))T+f_\sigma(x(0)).
\end{equation}
In fact, for $k>c_u(L)$, $\Sk^\sigma$ is bounded on $\M_\al$ by Lemma~
\ref{lem:bddbelow}. So (iii) is satisfied for $\Sk^\sigma$. For (v), since $\Lambda_\al M$ is homotopically equivalent to $\M_\al$ it holds that ${\rm H}_q(\M_\al,\K)={\rm H}_q(\Lambda_\al M,\K)\neq0$. It remains to verify (iv).

\begin{lem}\label{lem:Hess}
	For all $\sigma\in\bbN$, $\Sk^\sigma$ satisfies (iv) in Theorem~\ref{thm:Morsethm}.
\end{lem}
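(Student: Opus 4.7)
The plan is to verify assumption~(iv) of Theorem~\ref{thm:Morsethm} by showing that the bounded self-adjoint endomorphism $A_{(x,T)}$ of $T_{(x,T)}\M$ representing the Hessian $\delta^2\Sk^\sigma(x,T)$ through the Riemannian metric~\eqref{e:prodmetr} has the form
$$
A_{(x,T)}\;=\;\lambda\,I+K,\qquad \lambda:=\tfrac{1}{T}>0,
$$
with $K$ compact. Any self-adjoint operator of this shape is Fredholm of index zero, so $0$ can only appear in its spectrum as an isolated eigenvalue of finite multiplicity, which is exactly what~(iv) demands.

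The first step is to isolate from Lemma~\ref{lem:secondvar} the principal quadratic form
$$
B_0\bigl((\xi,\alpha),(\eta,\beta)\bigr)\;:=\;\tfrac{1}{T}\int_0^1\langle\nat\xi,\nat\eta\rangle\,dt.
$$
By~\eqref{e:Riemetric} we have $B_0=\tfrac{1}{T}\bigl(\langle\xi,\eta\rangle_x-\langle\xi(0),\eta(0)\rangle_{x(0)}\bigr)$, hence the operator associated to $B_0$ alone equals $\tfrac{1}{T}(I-P_0)$, where $P_0$ is determined by $\langle P_0\xi,\eta\rangle_x=\langle\xi(0),\eta(0)\rangle_{x(0)}$. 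Since $P_0=\mathrm{ev}_0^{\,*}\,\mathrm{ev}_0$ with $\mathrm{ev}_0:T_xW^{1,2}(\T,M)\to T_{x(0)}M$ the continuous evaluation map into an $m$-dimensional space, $P_0$ has finite rank and is therefore compact.

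The second step is to verify that every other term in Lemma~\ref{lem:secondvar} contributes a compact operator. All summands involving the scalar directions $\alpha,\beta$---including the diagonal term $\tfrac{2\alpha\beta k}{T}$ and the mixed $\alpha$-$\eta$ and $\beta$-$\xi$ cross terms---act through the one-dimensional $T$-factor and are therefore finite rank. The term $H_\sigma(x(0))[\xi(0),\eta(0)]$ depends only on the evaluations at $t=0$ and is again manifestly finite rank. The curvature term $\int\langle R(\xi,\dx)\eta,\dx\rangle\,dt$ and the magnetic contributions $\int(\nabla_\xi d\theta)_x(\eta,\dx)\,dt$ and $\int d\theta_x(\eta,\nat\xi)\,dt$ are continuous bilinear forms in which at least one factor enters without a covariant derivative. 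Since $x(\T)$ lies in a compact subset of $M$ and $R$, $d\theta$, $\nabla d\theta$ are smooth, each of these forms is dominated by a constant times either $\|\xi\|_{L^\infty}\|\eta\|_{L^\infty}$ or $\|\xi\|_{W^{1,2}}\|\eta\|_{L^\infty}$, so the associated operators factor through the compact Sobolev embedding $W^{1,2}(\T)\hookrightarrow C^0(\T)$ along the non-differentiated factor and are therefore compact.

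The main technical subtlety is the compactness of the operator arising from $\int d\theta_x(\eta,\nat\xi)\,dt$, where one factor is differentiated and the other is not. Writing this form as $\int\langle Y(\eta),\nat\xi\rangle\,dt$ with $Y$ the Lorentz force of $d\theta$, the associated operator on $T_xW^{1,2}$ is the composition of the pointwise map $\eta\mapsto Y(\eta)$ from $L^2$ to $L^2$ with the canonical bounded right inverse of $\nat$ on the orthogonal complement of the covariantly constant sections (modulo a finite-rank correction); precomposing with the compact inclusion $W^{1,2}\hookrightarrow L^2$ acting on the $\eta$ factor supplies the needed compactness. Summing all contributions yields $A_{(x,T)}=\tfrac{1}{T}I+K$ with $K$ compact, and the Fredholm alternative for self-adjoint operators of this form then provides the desired isolation and finite multiplicity of $0\in\sigma(A_{(x,T)})$.
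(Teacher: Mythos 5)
Your proposal is correct and follows essentially the same route as the paper: the paper likewise writes the Hessian operator as $A_\sigma=\frac{1}{T}(\mathrm{id}+B_\sigma)$ and verifies that $B_\sigma$ is compact (via the weak-to-norm continuity criterion, which rests on the same compact embeddings $W^{1,2}\hookrightarrow L^\infty, L^2$ you invoke term by term), then concludes by the Fredholm alternative. Your explicit finite-rank bookkeeping for the evaluation term $\langle\xi(0),\eta(0)\rangle_{x(0)}$ and for $H_\sigma$, and your term-by-term compactness check, simply spell out what the paper dismisses as ``easily seen''.
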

\begin{proof}
	Given a critical point $(x,T)$ of  $\Sk^\sigma$, define $$A_\sigma:T_{(x,T)}\M_\al\longrightarrow T_{(x,T)}\M_\al$$
	the linear self-adjoint operator by
	$$\big\langle (\xi,\al),A_\sigma (\eta,\beta)\big\rangle_{(x,T)}={\rm Hess}\Sk^\sigma(x,T)\big[ (\xi,\al), (\eta,\beta)\big].$$
	By virtue of Lemma~\ref{lem:secondvar} and (\ref{e:prodmetr}), we see that
	$$A_\sigma=\frac{1}{T}\bigg(id+B_\sigma\bigg)$$
	for which $B_\sigma:T_{(x,T)}\M_\al\longrightarrow T_{(x,T)}\M_\al$ is given by
	\begin{eqnarray}\label{Op:cpt}
	\big\langle (\xi,\al),B_\sigma (\eta,\beta)\big\rangle_{(x,T)}
	&=&\int^1_0 \langle R(\xi,\dx)\eta,\dx\rangle-\frac{\alpha}{T}\int^1_0\langle\nat\eta,\dx\rangle dt \notag\\
	&&-\frac{\beta}{T}\int^1_0\langle\nat\xi,\dx\rangle dt+T\int^1_0(\nabla_\xi d\theta)_x(\eta,\dx)dt\notag\\
	&&+T\int^1_0d\theta_x(\eta,\nat\xi)dt+(2k-1)\alpha\beta
	\notag\\
	&&+T\big(H_\sigma(x(0))[\xi(0),\eta(0)]-\langle\xi(0),\eta(0)\rangle\big).
	\end{eqnarray}
	Therefore, to prove that $\Sk^\sigma$ satisfies assumption (iii) of Theorem~\ref{thm:Morsethm}, it suffices to prove that $B_\sigma$ is a compact operator. To do this, we have to show that for any sequence $\{(\eta_n,\beta_n)\}\subset T_{(x,T)}\M_\al$,
	if $(\eta_n,\beta_n)$ converges to $(\eta_*,T_*)$ weakly  in $W^{1,2}$ and strongly in $L^\infty$ then
	$$\bigg|\big\langle (\xi,\al),B_\sigma (\eta_n,\beta_n)\big\rangle_{(x,T)}-\big\langle (\xi,\al),B_\sigma (\eta_*,\beta_*)\big\rangle_{(x,T)}\bigg| \xrightarrow[\rm uniformly]{n} 0$$
   with respect to $(\xi,\al)\subset T_{(x,T)}\M_\al$ satisfying $\|(\xi,\al)\|\leq 1$, which is easily seen from $(\ref{Op:cpt})$.

\end{proof}

Since assumptions (i)--(v) of Theorem~\ref{thm:Morsethm} are satisfied for $\Sk^\sigma$, we have the following.

\begin{thm}\label{thm:critpt}
	Suppose that $l_\al>0$,  $\|\theta\|_\infty<\infty$, $k>c_u(L)$ and ${\rm H}_q(\Lambda_\al M;\K)\neq0$. Then for every $\sigma\in\bbN$, there exists a critical point $(x_\sigma,T_\sigma)\in\M_\al$ of $\Sk^\sigma$ satisfying
	\begin{gather}
	\Sk^\sigma(x_\sigma,T_\sigma)=\inf\limits_{D\in \Gamma_q}\sup\limits_{(x,T)\in D}\Sk^\sigma(x,T)\qquad\hbox{and}\label{eq:critvalSk1}\\
	q\leq m^0(x_\sigma,T_\sigma)+m(x_\sigma,T_\sigma),\label{eq:critptSk2}
	\end{gather}
where $q\in\bbN$ is given by the assumption~(A), and $\Gamma_q$ is defined as in (\ref{e:vircycle}).
		
\end{thm}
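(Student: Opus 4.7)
The plan is to obtain Theorem~\ref{thm:critpt} as a direct application of the generalized Morse theorem (Theorem~\ref{thm:Morsethm}) to the functional $\Sk^\sigma$ on the connected component $\M_\al$. The assumptions of the hypothesis are exactly what is needed to activate the five conditions in Theorem~\ref{thm:Morsethm}, so the proof will amount to checking each condition in turn and then reading off the conclusion.

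First I would verify conditions (i)--(v). For (i), the sublevel set $\{\Sk^\sigma \leq c\}$ lives inside $W^{1,2}(\T,M)\times[\tau_0,\infty)$ for some $\tau_0>0$ by Lemma~\ref{lem:bddaf0}, and on this region the induced metric restricted from~(\ref{e:prodmetr}) is complete since $W^{1,2}(\T,M)$ is complete and the $T$-factor is bounded away from $0$. Condition (ii), the Palais--Smale condition, is exactly Proposition~\ref{prop:PSc}, which applies because $k>c_u(L)$ and $\|\theta\|_\infty<\infty$. For (iii), the decomposition
$$\Sk^\sigma(x,T)=\mathcal{S}_{c_u(L)}(x,T)+(k-c_u(L))T+f_\sigma(x(0))$$
together with Lemma~\ref{lem:bddbelow} (giving $\mathcal{S}_{c_u(L)}\geq -B$) and $f_\sigma\geq 0$ shows boundedness from below. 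Condition (iv) is precisely the content of Lemma~\ref{lem:Hess}, where the Hessian is expressed as $\frac{1}{T}(\mathrm{id}+B_\sigma)$ with $B_\sigma$ compact, making the associated linear operator a compact perturbation of the identity (hence Fredholm of index zero with $0$ an isolated eigenvalue of finite multiplicity). Finally, (v) follows from the homotopy equivalence $\M_\al \simeq \Lambda_\al M$, combined with the hypothesis $\H_q(\Lambda_\al M;\K)\neq 0$ from (A).

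Once (i)--(v) are in place, Theorem~\ref{thm:Morsethm} immediately produces a critical point $(x_\sigma,T_\sigma)\in\M_\al$ with
$$\Sk^\sigma(x_\sigma,T_\sigma)=\inf_{D\subset \Gamma_q}\sup_{(x,T)\in D}\Sk^\sigma(x,T),$$
yielding~(\ref{eq:critvalSk1}) directly. The index conclusion of Theorem~\ref{thm:Morsethm} reads $m(x_\sigma,T_\sigma)\leq q\leq m^*(x_\sigma,T_\sigma)$, and to derive~(\ref{eq:critptSk2}) I would use the elementary identity
$$m^*(x_\sigma,T_\sigma)=m(x_\sigma,T_\sigma)+m^0(x_\sigma,T_\sigma),$$
which is just the statement that the large Morse index equals the strict Morse index plus the nullity (the latter being finite by condition (iv)). Substituting this into $q\leq m^*(x_\sigma,T_\sigma)$ gives exactly~(\ref{eq:critptSk2}).

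There is essentially no hidden obstacle here: the nontrivial analytic content has already been bundled into Lemmas~\ref{lem:bddaf0}, \ref{lem:bddbelow}, \ref{lem:Hess}, and Proposition~\ref{prop:PSc}. The only point to be slightly careful about is condition (i), since $\M_\al$ itself is not complete (the $T=0$ boundary is at finite distance), but this is harmless precisely because Lemma~\ref{lem:bddaf0} confines the Palais--Smale sequences at bounded energy to the region $T\geq\tau_0>0$, so (i) needs only to be checked on strips $\{a\leq \Sk^\sigma\leq b\}$ where completeness does hold.
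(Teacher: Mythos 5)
Your proposal is correct and follows essentially the same route as the paper: the paper likewise verifies conditions (i)--(v) of Theorem~\ref{thm:Morsethm} via Lemma~\ref{lem:bddaf0}, Proposition~\ref{prop:PSc}, the decomposition $\Sk^\sigma=\mathcal{S}_{c_u(L)}+(k-c_u(L))T+f_\sigma(x(0))$ with Lemma~\ref{lem:bddbelow}, Lemma~\ref{lem:Hess}, and the homotopy equivalence $\M_\al\simeq\Lambda_\al M$, and then invokes the minimax conclusion. Your observation that $q\leq m^*$ translates into~(\ref{eq:critptSk2}) via $m^*=m+m^0$ (valid here because the Hessian is a compact perturbation of a positive operator, so the negative and null spectral subspaces are finite-dimensional) is exactly the intended reading.
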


From~(\ref{e:critpt1})--(\ref{e:critpt3}), we see that for every $\sigma\in\bbN$,  $\ga_\sigma(t)=x_\sigma(t/T_\sigma)$ obtained in Theorem~\ref{thm:critpt} is a $C^\infty$ magnetic geodesic on $[0,T_\sigma)$, but not a closed magnetic geodesic because in general $\dot{\ga}_\sigma(0^+)\neq \dot{\ga}_\sigma(T_\sigma^-)$.

\begin{proof}[\textbf{Proof of Theorem~\ref{thm: mainresult1}}] Let $(x_\sigma,T_\sigma)$ be a critical point of $\Sk^\sigma$ given by Theorem~\ref{thm:critpt} which satisfies (\ref{eq:critvalSk1}) and (\ref{eq:critptSk2}).
We shall prove that there exists a compact set $K$ of $M$ independent of $\sigma$ such that $x_\sigma\subset K$ for all $\sigma\in\bbN$. Once this is proved, then for $\sigma$ sufficiently large, we have supp$f_{\sigma}\cap K=\emptyset$. Then it follows from~(\ref{e:critpt3}) that $(x_{\sigma},T_{\sigma})$ corresponds to a closed magnetic geodesic.

Arguing by contradiction, assume that
there exists a subsequence $\sigma_n\stackrel{n}{\to}\infty$ such that
\begin{equation}\label{lim:maxgoinfinity}
\sup\limits_{t\in[0,1]}d(x_{\sigma_n}(t),x_0)\stackrel{n}{\longrightarrow} +\infty
\end{equation}
for some fixed point $x_0$ in $M$. For simplicity, in the following we abbreviate $(x_{\sigma_n},T_{\sigma_n})$ and $\Sk^{\sigma_n}$ by $(x_n,T_n)$ and $\Sk^n$ respectively.

Since any homology class can be represented by a cycle with compact support, there exists a constant $A=A(k)>0$ (independent of $n$) such that
$$\Sk^n(x_n,T_n)\leq A$$
for all $n\in\bbN$. Then by $l_\al>0$,  $\|\theta\|_\infty<\infty$ and $k>c_u(L)$ we have that
\begin{gather}
\frac{A+\max\{0,\inf_{\M_\al}\mathcal{S}_{c_u(L)}\}}{k-c_u(L)}=:d(k)\geq T_n\geq \delta(k):=\frac{2l_\alpha^2}{l_\alpha^2+4A+4\|\theta\|^2_\infty}\qquad\hbox{and}\label{tbddbelow}\\
\int^1_0|\dot{x}_n(t)|dt\leq D(k):=2\big(Ad(k)+d^2(k)\|\theta\|^2_\infty\big)^{\frac{1}{2}}.\label{lenbddabove}
\end{gather}
In fact, (\ref{tbddbelow}) can be seen from (\ref{e:Tupperbdd}), (\ref{e:energybd}) and (\ref{e:lenthenergyineq}), while (\ref{lenbddabove}) from   (\ref{e:lgbdd}).

Since by our notation every $(x_n,T_n)$ is critical point of $\Sk^n$, its speed is independent of time (see~(\ref{e:critpt2'})). So by
(\ref{lenbddabove}) we have that
\begin{equation}\label{eq:velocityestim}
|\dot{x}_n(t)|\leq D\;\hbox{for all}\; t\in [0,1]
\end{equation}

From (\ref{lim:maxgoinfinity}) and (\ref{lenbddabove}) we deduce that
\begin{equation}\label{lim:goinfinity}
\inf\limits_{t\in[0,1]}d(x_{n}(t),x_0)\stackrel{n}{\longrightarrow} +\infty.
\end{equation}
In what follows, we will show that if some geometric restrictions are put on the end of $M$, (\ref{lim:goinfinity}) cannot happen.

For each $n\in\bbN$ we set
$$V^n:=\big\{(\xi,0)\in T_{(x_n,T_n)}\M_\al\big|\xi(0)=0\big\}.$$
It follows from~(\ref{e:Riemetric}) and (\ref{e:prodmetr})   that
$$(V^n)^\perp=\big\{(\eta,\beta)\in T_{x_n}\M_\al\big|\nat\nat\eta(t)=0\;\hbox{for all }t\in [0,1]\;\hbox{with}\;\eta(0)=\eta(1)\;\hbox{and}\;\beta\in\R\big\},$$
consequently,  we have
\begin{equation}\label{eq:dimestim}
\hbox{dim}(V^n)^\perp\leq 2\hbox{dim} T_{x_n(0)}M+1=2m+1.
\end{equation}
Then by Lemma~\ref{lem:secondvar} and (\ref{lim:goinfinity}),
using assumptions~(B) and (C) in Theorem~\ref{thm: mainresult1}, one can find a sequence of numbers $\tau_n\stackrel{n}{\to} 0^+$ such that
for every $(\xi,0)\in V^n$ we have that
	\begin{eqnarray}\label{eq:SkHessestim}
\hbox{Hess}\Sk^n(x_n,T_n)[(\xi,0),(\xi,0)]&=&\frac{1}{T_n}\int^1_0\big(\langle\nat\xi,\nat\xi\rangle+\langle R(\xi,\dx_n)\xi,\dx_n\rangle\big)dt\notag\\
&&+\int^1_0(\nabla_\xi d\theta)_{x_n}(\xi,\dx_n)dt+\int^1_0d\theta_{x_n}(\xi,\nat\xi)dt,\notag\\
&\geq&\frac{1}{T_n}\int^1_0\bigg(\langle\nat\xi,\nat\xi\rangle-K(\xi,\dx_n)\big[|\xi|^2|\dx_n|^2-\langle \xi, \dx_n\rangle^2\big]\bigg)dt\notag\\
&&-\int^1_0|(\nabla d\theta)_{x_n}||\xi|^2|\dx_n|dt-\int^1_0|(d\theta)_{x_n}||\nat\xi||\xi|dt\notag\\
&\geq&\frac{1}{T_n}\int^1_0\langle\nat\xi,\nat\xi\rangle dt-\frac{\tau_n}{T_n}\int^1_0\big[|\xi|^2|\dx_n|^2-\langle \xi, \dx_n\rangle^2\big]dt\notag\\
&&-\tau_n\int^1_0|\xi|^2|\dx_n|dt-\tau_n\int^1_0|\nat\xi||\xi|dt.
\end{eqnarray}
Since $\xi(0)=0$ for all $(\xi,0)\in V^n$, by the H\"{o}lder inequality, it is easy to show that
\begin{equation}\label{eq:poincareineq}
\int^1_0|\nat \xi|^2dt\geq \frac{1}{4}\int^1_0|\xi|^2 dt.
\end{equation}
Plugging (\ref{eq:velocityestim}) and (\ref{eq:poincareineq}) into (\ref{eq:SkHessestim}), for every $(\xi,0)\in V^n$ we have that
\begin{eqnarray}\label{eq:SkHessestim1}
\hbox{Hess}\Sk^n(x_n,T_n)[(\xi,0),(\xi,0)]
&\geq&\frac{1}{T_n}\int^1_0\big|\nat\xi\big|^2dt -\frac{D^2\tau_n}{T_n}\int^1_0|\xi|^2dt-D\tau_n\int^1_0|\xi|^2dt\notag\\
&&-\tau_n\int^1_0\bigg(\frac{1}{T_n}|\nat \xi|^2+\frac{T_n}{4}|\xi|^2\bigg)dt\notag\\
&\geq&\frac{1}{T_n}\big(1-4D^2\tau_n-4D\tau_nT_n-\tau_n-T^2_n\tau_n\big)\int^1_0\big|\nat\xi\big|^2dt.\notag
\end{eqnarray}
Note that $T_n$ is bounded above by some positive constant depending on energy level $k\in(c_u(L),\infty)$ (see~\ref{e:Tupperbdd}). It follows that for large enough $n$
$$\hbox{Hess}\Sk^n(x_n,T_n)[(\xi,0),(\xi,0)]>0\quad \hbox{for all}\; (\xi,0)\in V^n\setminus\{0\}.$$
So we get that
$$m^0(x_n,T_n)+m(x_n,T_n)\leq\hbox{dim}(V^n)^\perp=2m+1$$
which contradicts (\ref{eq:critptSk2}). This completes the proof.
	
\end{proof}
From the proof of Theorem~\ref{thm: mainresult1} we see the following.
\begin{rmk}\label{rem:thm1}
	Given $A>0, k>c_u(L)$ and a free homotopy class $\alpha$ with $l_\al>0$, under the assumptions~(B) and (C) in Theorem~\ref{thm: mainresult1}, there exists a positive integer $\sigma_*=\sigma_*(A,k,l_\alpha)$ such that if $\sigma>\sigma_*$ any critical point $(x,T)$ of $\Sk^\sigma$ representing $\al$ which satisfies $\Sk^\sigma(x,T)<A$ and $m^0(x,T)+m(x,T)>2m+1$ (if there exists) is precisely some critical point of $\Sk$ on $\M_\al$ with supp$f_{\sigma}\cap x(\T)=\emptyset$.
\end{rmk}

\subsection{Proof of Theorem~\ref{thm: mainresult2}}
Our proof of Theorem~\ref{thm: mainresult2} follows some similar ideas in~\cite{Lo1,Lu1}, but also incorporates Benci and Giannoni's penalization method~\cite{BeG}. The key of the proof is to use Theorem~\ref{thm:Morsethm2}.

Under the assumptions of Theorem~\ref{thm: mainresult2}, given $k\in(c_u(L),\infty)$, by Theorem~\ref{thm: mainresult1},  for every $i\in\bbN$ there exists a closed magnetic geodesic $\ga_i$ in $M$ representing $\al^i$ with energy $k$ and large Morse index $m^*(\ga_i)\geq q$ ($> 2m+1$). In other words, for all $i\in\bbN$ the $q$-critical sets $\mathcal{K}_q(\Sk,\al^i) $ of $\Sk$ on $\M_{\al^i}$ are not empty.  However, this sequence of periodic orbits $\{\ga_i\}_{i\in\bbN}$ may only consist of iterations of finitely many non-iteration periodic orbits.


To finish the proof, we argue by contradiction and assume that
\begin{description}
	\item[(H)] (i) for every $i\in\bbN$, $\Sk$ possesses only finitely many non-iteration critical orbits of energy $k$ representing $\al^i$; (ii) there exists an integer $i_0>1$ so that for all $i>i_0$ every closed magnetic geodesic of energy $k$ representing $\al^i$ is an iteration of one of finitely many non-iteration closed magnetic geodesics  $\hat{\ga}_i$ of energy $k$ representing $\al^{j_i}$ with $1\leq j_i\leq i_0$, $i=1,2,\ldots, r$.
\end{description}
Then the critical set of $\Sk$ consists of finitely many critical circles
$$\T\cdot \hat{\ga}_1,\T\cdot \hat{\ga}_1,\ldots,\T\cdot \hat{\ga}_r$$
together with their iterates $\T\cdot \hat{\ga}_i^n$ for $1\leq i\leq r$ and $n\in\bbN$.

Set $\rho:=i_0!$. Define the iteration periodic orbits as
$\ga_i:=\hat{\ga}_i^{\rho/j_i}$, $i=1, 2, \ldots, r$. Clearly, each $\ga_i$ is a closed magnetic geodesic of energy $k$ representing $\al^{\rho}$, and for any $\ell\in\bbN$ we have the following:
\begin{equation}\label{e:iteration}
\mathcal{K}(\Sk, \al^{\ell\rho})=\big\{\T\cdot\ga_i^\ell\big|1\leq i\leq r\big\}
\end{equation}
because of $\ell\rho>i_0$.

 In the following \textbf{we always assume that  the pullback bundles $\ga_i^*TM\to\R/T_i\Z$ are trivial}, where $T_i$, $i=1,2,\ldots,r$ are the  periods of $\ga_i$, otherwise one may consider the $2$-fold iterations $\ga_i^2$ instead of $\ga_i$ and the following argument goes through analogously.

\begin{clm}~\label{clm:novanishing} Under the assumptions of Theorem~\ref{thm: mainresult2},
	for each $\ell\in\bbN$ there exists $\T\cdot\ga_*^\ell \in\mathcal{K}(\Sk, \al^{\ell \rho})$ satisfying
	$$C_q\big(\Sk,\T\cdot\ga_*^\ell;\K\big)\neq0\quad\hbox{and}\quad q-2m\leq q-m^0(\ga_*^\ell)\leq m(\ga_*^\ell)\leq q.$$
\end{clm}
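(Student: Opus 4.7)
The plan is to apply Theorem~\ref{thm:Morsethm2} to the penalized functional $\Sk^\sigma$ on $\M_{\al^{\ell\rho}}$, and then combine its conclusion with Remark~\ref{rem:thm1} to read off the non-vanishing of a critical group at one of the $\ga_i^\ell$. Hypothesis (H) together with \eqref{e:iteration} is essential: it guarantees that, for each fixed $\ell$, the critical set of $\Sk$ in $\M_{\al^{\ell\rho}}$ is the \emph{finite} union of isolated critical circles $\bigcup_{i=1}^{r}\T\cdot\ga_i^\ell$, and Remark~\ref{rem:thm1} will identify the critical points of $\Sk^\sigma$ whose large Morse index exceeds $2m+1$ with precisely these circles.

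First I would use (A') to fix a compactly supported singular $q$-cycle $D_0\subset\M_{\al^{\ell\rho}}$ whose class is non-zero in $\H_q(\M_{\al^{\ell\rho}};\K)\cong\H_q(\Lambda_{\al^{\ell\rho}}M;\K)$, and set $b_0:=\sup_{D_0}\Sk<\infty$. Next I would pick $A>\max\bigl(b_0,\max_i\Sk(\ga_i^\ell)\bigr)+1$ and, by Remark~\ref{rem:thm1}, choose $\sigma=\sigma(\ell,A)$ so large that (a) every critical point of $\Sk^\sigma$ on $\M_{\al^{\ell\rho}}$ with action below $A$ and $m+m^0>2m+1$ is actually a critical point of $\Sk$, hence equals some $\ga_i^\ell$, and (b) $f_\sigma$ vanishes on the compact set underlying $D_0$ and on a neighborhood of each $\ga_i^\ell$ (possible since all $\ga_i^\ell$ sit in a single compact subset of $M$). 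After this choice $\Sk^\sigma=\Sk$ on $D_0$ and near every $\T\cdot\ga_i^\ell$, while hypotheses (i)--(iv) of Theorem~\ref{thm:Morsethm} hold for $\Sk^\sigma$ by Lemma~\ref{lem:bddaf0}, Proposition~\ref{prop:PSc}, \eqref{e:bddbelow} and Lemma~\ref{lem:Hess}.

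Now I would apply Theorem~\ref{thm:Morsethm2} with $b=A$ and $q_0=q$: condition~\eqref{e:indexcondition} holds because $(\Sk^\sigma)^{A}\cap\mathcal{K}_q(\Sk^\sigma)$ is the finite union of the critical circles $\T\cdot\ga_i^\ell$ with $m^*(\ga_i^\ell)\geq q$, all lying strictly below level $A$. The theorem produces, for any sufficiently small $\eps>0$, a $C^2$-perturbation $f_\eps$ of $\Sk^\sigma$ with
\[
\dim\H_q\bigl((f_\eps)^{A-\eps};\K\bigr)\leq \sum_{i:\,m^*(\ga_i^\ell)\geq q}\dim C_q(\Sk^\sigma,\T\cdot\ga_i^\ell;\K).
\]
The left-hand side is non-zero: for $\eps$ small enough $D_0\subset(f_\eps)^{A-\eps}$ (since $D_0\subset\{\Sk^\sigma\leq b_0\}$ and $|f_\eps-\Sk^\sigma|<\eps$), so the composition $\H_q(D_0;\K)\to\H_q((f_\eps)^{A-\eps};\K)\to\H_q(\M_{\al^{\ell\rho}};\K)$ sends $[D_0]$ to a non-zero class. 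Hence some $i$ delivers a critical circle $\T\cdot\ga_*^\ell$ with $C_q(\Sk^\sigma,\T\cdot\ga_*^\ell;\K)\neq 0$, and because $\Sk^\sigma$ agrees with $\Sk$ near this orbit we conclude $C_q(\Sk,\T\cdot\ga_*^\ell;\K)\neq 0$. The index bounds then follow from \eqref{e:Crvanish} (which forces $q\in[m(\ga_*^\ell),m(\ga_*^\ell)+m^0(\ga_*^\ell)]$) together with the estimate $m^0(\ga_*^\ell)\leq 2m$ coming from \eqref{e:nullrelation}.

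The main obstacle I foresee is the careful interleaving of choices in the second step: $A$ must exceed the actions $\Sk(\ga_i^\ell)=\ell\Sk(\ga_i)$, which grow linearly in $\ell$, so the penalization threshold $\sigma$ must be refreshed for every $\ell$; furthermore, the non-vanishing of the left-hand side of the Morse inequality depends on $D_0$ still surviving in the sublevel set of the perturbed $f_\eps$, which forces the hierarchy $\sigma\gg\sigma_*(A,k,l_{\al^{\ell\rho}})$ and then $\eps\ll A-b_0-1$. The rest of the argument is then a straightforward translation of the critical-group inequality into index bounds via \eqref{e:Crvanish}--\eqref{e:nullrelation}.
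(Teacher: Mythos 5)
Your proposal is correct and follows essentially the same route as the paper: penalize, invoke Remark~\ref{rem:thm1} to identify the high-index critical points of $\Sk^\sigma$ below a fixed level $A$ with the finitely many circles $\T\cdot\ga_i^\ell$, apply Theorem~\ref{thm:Morsethm2}, bound the left-hand side from below by a compactly supported non-trivial $q$-cycle surviving in $(f_\eps)^{A-\eps}$, and read off the index bounds from \eqref{e:Crvanish} and \eqref{e:nullrelation}. The only cosmetic difference is that you use a single cycle where the paper tracks $p$ linearly independent ones to get the sharper lower bound $p$, which is not needed for the claim as stated.
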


\begin{proof}[Proof of Claim~\ref{clm:novanishing}]
Consider the penalized functional $\Sk^\sigma$ on the Hilbert manifold $\M_{\al^{\ell\rho}}$. By the condition (A') ,
$l_{ \al^{\ell \rho}}>0$ and $\H_q(\Lambda_{\al^{\ell \rho}} M;\K)\neq 0$. Then by Lemma~\ref{lem:bddbelow} and Proposition~\ref{prop:PSc}
for every $k>c_u(L)$ and all $\sigma\in\bbN$ the functionals $\Sk^\sigma:\M_{\al^{\ell\rho}}\to \R$ satisfy the Palais-Smale condition and are bounded from below.

Let $c_1<\cdots<c_l$ be all critical values of $\Sk$ on $\M_{\al^\rho}$, where $l\leq r$. By (\ref{e:iteration}), $\ell c_1<\cdots<\ell c_l$ are all the critical values of $\Sk$ on $\M_{\al^{\ell\rho}}$.

 Let $\sigma_*=\sigma_*(A,k,l_{\al^{\ell\rho}})$ be an integer such that for any $\sigma>\sigma_*$ every critical point $(x,T)$ of $\Sk^\sigma$ representing $\al^{\ell\rho}$ with $\Sk^\sigma(x,T)\leq A$ and $m^*(x,T)>q$ is precisely some critical point of $\Sk$ on $\M_{\al^{\ell\rho}}$, and there is at least one critical point with this property, see Remark~\ref{rem:thm1}. Note also that near a small neighborhood of such a critical point $(x,T)$ we have $\Sk^\sigma=\Sk$, which means that $\T\cdot(x,T)$ is an isolated critical submanifold of $\Sk^\sigma$ because it is of $\Sk$.

 Given $A>\ell c_l$, fix $\sigma>\sigma_*$. Then
 $$\mathcal{K}_{q}(\Sk^\sigma,\al^{\ell\rho})\cap \big(\Sk^\sigma\big)^A=\big\{\T\cdot\ga_{i_j}^\ell\big|1\leq j\leq s\big\}\subseteq \mathcal{K}(\Sk, \al^{\ell\rho}),$$
 where $s\leq r$ and $1\leq i_j\leq r$, $j=1,\ldots, s$.
From the proof of Theorem~\ref{thm: mainresult1} we find that assumptions (i) -- (iv) in Theorem~\ref{thm:Morsethm} for $\Sk^\sigma$ on $\M_{\al^{\ell\rho}}$ are all satisfied. Consequently, by Theorem~\ref{thm:Morsethm2}, for sufficiently small $\delta>0$ and $\eps>0$, there exists a function $f_\eps$ on $\M_{\al^{\ell\rho}}$ such that $\|f_\eps-\Sk^\sigma\|_{C^2(\M_{\al^{\ell\rho}},\R)}<\eps$ and
\begin{equation}\label{e:nontrivialCrgroup}
\hbox{dim}\;{\rm H}_q\big((f_\eps)^{A-\delta-\eps};\K\big)\leq\sum\limits_{j=1}^{s}\hbox{dim}\;C_q(\Sk^\sigma,\T\cdot\ga_{i_j}^\ell;\K)=\sum\limits_{j=1}^{s}\hbox{dim}\;C_q(\Sk,\T\cdot\ga_{i_j}^\ell;\K).
\end{equation}

Due to  $\H_q(\Lambda_{\al^{\ell \rho}} M;\K)\neq 0$,  $\hbox{dim}\;{\rm H}_q(\M_{\al^{\ell\rho}};\K)\geq p$ for some $p\in\bbN$. So we can choose $\K$-linearly independent elements of ${\rm H}_q(\M_{\al^{\ell\rho}};\K)$, saying $u_1,\ldots, u_p$, which have representations of  singular cycles $Z_1,\ldots, Z_p$ of ${\rm H}_q(\M_{\al^{\ell\rho}};\K)$. Let $\Sigma$ be a compact set which contains the supports of  $Z_i,\;i=1,\ldots, p$. Then there exists $B\in\R$ such that for any $b\geq B$ we have $\Sigma\subseteq (f_\eps)^b$. Clearly, $Z_1,\ldots, Z_p$ are also singular cycles of $\Sigma\subseteq (f_\eps)^b$ which are $\K$-linearly independent.  So we have
\begin{equation}\label{e:nontrivialhomo}
\hbox{dim}\;{\rm H}_q\big( (f_\eps)^b;\K)\big)\geq p.
\end{equation}

Now we choose $A>B$ such that $A>B+\delta+\eps$. It follows from (\ref{e:nontrivialCrgroup})
and (\ref{e:nontrivialhomo}) that

\begin{equation}\label{e:nontrivial}
\sum\limits_{j=1}^{s}\hbox{dim}\;C_q(\Sk,\T\cdot\ga_{i_j}^\ell;\K)\geq p.
\end{equation}
So $\hbox{dim}\;C_q(\Sk,\T\cdot\ga_{i_j}^\ell;\K)\geq 1$ for some $j$. It is well known that the critical group $C_d(\Sk,\T\cdot\ga)$ vanishes if $d\notin [m(\ga),m^*(\ga)]$. This completes the proof of the claim.

\noindent \textbf{Continuing the proof of Theorem~\ref{thm: mainresult2}}. We shall prove
\begin{clm}~\label{clm:vanishing} For every integer $q\geq m+2$ there exists a constant $\ell_0(q)>0$ such that
	for any $\ell\geq \ell_0(q)$,
	$$C_q\big(\Sk,\T\cdot\ga_i^\ell;\K\big)=0\quad\forall i=1,\ldots, r.$$
\end{clm}

\noindent {\bf Proof of Claim~\ref{clm:vanishing}.}
Equip the Hilbert manifold $W^{1,2}(\T,M)$ a new metric as
\begin{equation}\label{e:NewRiemetric}
\langle \xi,\eta\rangle'_x:=\int_{\T}\langle \xi(t), \eta(t)\rangle_{x(t)}dt+\int_{\T}\langle \nat\xi(t), \nat\eta(t)\rangle_{x(t)} dt.
\end{equation}
Clearly, this metric is equivalent to the metric (\ref{e:Riemetric}), and induces a Riemannian structure on $\M=W^{1,2}(\T,M)\times \R^+$ given by the product metric
\begin{equation}\label{e:prodmetr'}
\big\langle(\xi,\al),(\eta,\beta)\big\rangle'_{(x,T)}=\al\beta+\langle \xi,\eta\rangle'_x
\end{equation}
which is equivalent to the original metric (\ref{e:prodmetr}) on $\M$. Under the metric~(\ref{e:prodmetr'}), the action of $\T$ given by~(\ref{e:invariant}) is a smooth isometric action on $\M$ (hence on each component $\M_\al$ of $\M$).

Now we consider the $\T$-invariant function $\Sk$ on the  $\T$-Hilbert manifold $\M_{\al^{\ell\rho}}$ equipped with the metric~(\ref{e:prodmetr'}). Then we have
\begin{equation}\label{e:Taction}
\big\langle d^2\Sk(\psi (t, z))d\psi_tX,Y\big\rangle=\big\langle d\psi_t d^2\Sk(z)X,Y\big\rangle,\quad \forall X\in T_z\M,\;Y\in T_{\psi_t(z)}\M,
\end{equation}
where $\psi_t(\cdot)=\psi(t,\cdot)$ is the $\T$-action, $d^2\Sk(z)$ is the bounded self-adjoint operator associated to the second differential of $\Sk(z)$.

By the $\T$-tubular neighborhood theorem~\cite{Bre}, for every isolated critical orbit $N_i:=\T\cdot \ga_i^\ell$ in $\mathcal{K}(\Sk, \al^{\ell\rho})$  (see~(\ref{e:iteration})), we take a $\T$-tubular neighborhood $B^i_\varepsilon$ of $\T\cdot \ga_i^\ell$ such that $B^i_\varepsilon$ is diffeomorphic to $\nu N_i(\varepsilon)$, where $B^i_\varepsilon=\{z\in\M_{\al^{\ell\rho}}|\hbox{dist}(z,N_i)\leq  \varepsilon\}$, $\nu N_i$ is the normal bundle of $N_i$ and $\nu N_i(\varepsilon)=\{(z,v)\in\nu N_i|z\in N_i,\;\|v\|\leq\varepsilon\}$.
The diffeomorphism between $B^i_\varepsilon$ and $\nu N_i(\varepsilon)$ is $\T$-equivariant. For a sufficiently small $\varepsilon>0$ we have that all $B^i_\varepsilon$ are mutually disjoint in $\M_{\al^{\ell\rho}}$. By~(\ref{e:prodmetr'}), for any $z=(x,T)\in B^i_\varepsilon$ the image $x(\T)$  is lying in some compact neighborhood of the image $\ga_i([0,T_i])$ in $M$. So, for every $k>c_u(L)$, $\Sk$ satisfies the Palais-Smale condition on all  $B^i_\varepsilon$, $i\in\{1,\ldots,r\}$, see~\cite{CIPP2}. Since we always consider the local homological invariants $C_*(\Sk, N_i;\K)$, for the simplicity of notations, we may identify each $B^i_\varepsilon$ with $\nu N_i(\varepsilon)$, and often work on $\nu N_i(\varepsilon)$ if there is no confusion.

Since $0$ is an isolated eigenvalue of $d^2\Sk(\ga_i^\ell)$, for every $i\in\{1,\ldots,r\}$,  by~(\ref{e:Taction}) we have an orthogonal composition of $\nu N_i$
$$\nu N_i=\nu^+N_i\oplus\nu^-N_i\oplus\nu^0N_i,$$
where $\nu^\pm N_i$ corresponds to the positively/negatively definite space of $d^2\Sk$, and $\nu^0N_i$ corresponds to the null space of $d^2\Sk$. All the three bundles are all $\T$-Hilbert vector bundles.

Denote $\widetilde{\mathcal{S}}_k=\Sk|_{\nu^0 N_i(\varepsilon)}$, then every $N_i$ is an isolated critical orbit of $\widetilde{\mathcal{S}}_k$ on $\nu^0 N_i(\varepsilon)$. So all  $C_*(\widetilde{\mathcal{S}}_k,N_i;\K)$ are well-defined.

Since $\Sk\in C^\infty(\nu N_i(\varepsilon),\R)$ is $\T$-invariant and satisfies the Palais-Smale condition, by shifting theorem in~\cite[Corollary~2.5]{Wa} or \cite[Theorem~2.4]{Cha} we have
\begin{equation}\label{e:shiftthm}
C_*\big(\Sk, \T\cdot \ga_i^\ell;\K\big)=C_{*-m(\ga_i^\ell)}\big(\widetilde{\mathcal{S}}_k,\T\cdot \ga_i^\ell;\K\big).
\end{equation}

If $\hat{m}(\ga_i)=0$, by (\ref{e:iterationineq1}), we get
$0\leq m(\ga_i^\ell)\leq m+1-m^0(\ga_i^\ell)$. Thus for $q\geq m+2$, we have
$$q-m(\ga_i^\ell)\geq q-m-1+m^0(\ga_i^\ell)\geq 1+m^0(\ga_i^\ell)$$
Since $\hbox{dim}\;\nu^0N_i=m^0(\ga_i^\ell)$,
$\widetilde{\mathcal{S}}_k$ is defined on a manifold of dimension $m^0(\ga_i^\ell)\leq 2m$. It follows that
$$C_{q-m(\ga_i^\ell)}(\widetilde{\mathcal{S}}_k,\T\cdot \ga_i^\ell;\K)=0.$$

If $\hat{m}(\ga_i)>0$, by (\ref{e:iterationineq1}), we get
$\ell\hat{m}(\ga_i)-m\leq m(\ga_i^\ell)$. So we have
$$q-m(\ga_i^\ell)\leq q+m-\ell \hat{m}(\ga_i)<0$$
provided that $\ell>(q+m)/\hat{m}(\ga_i)$, which implies
$$C_{q-m(\ga_i^\ell)}(\widetilde{\mathcal{S}}_k,\T\cdot \ga_i^\ell;\K)=0.$$
For $q>m+2$, we take
$$\ell_0(q)=1+\max\bigg\{\bigg[\frac{q+m}{\hat{m}(\ga_i)}\bigg]\bigg|\hat{m}(\ga_i)\neq0,\;1\leq i\leq r\bigg\}.$$
Here $[a]$ denotes the largest integer less than or equal to $a$.
Then the desired claim follows from (\ref{e:shiftthm}) immediately.

Finally, if $q> 2m+1$, from Claim~\ref{clm:novanishing}
and Claim~\ref{clm:vanishing} we conclude a contradiction. So the assumption~(H) does not hold. This completes the proof.

\end{proof}

\section*{Acknowledgements}

The author would like to express his great gratitude to the anonymous
referees for his carefully reading and helpful suggestions which improve the elaboration of the present paper. He also thanks his colleague  Hong Huang for helpful discussions.
The author is supported by the National Natural Science Foundation of China No. 11701313.

\section{Appendix~A: an example without closed magnetic geodesics in high energy levels}\label{app:Nonorbits}
Consider the cylinder $\Sigma:=\R\times\R/\Z$ with the metric $g$, which, in local coordinates $(x,y)$, is given by $dx^2+(1+e^x)^2 dy^2$.
Let $L:T\Sigma\to\R$ be a Lagrangian of the form
$$L(q,v)=\frac{1}{2}|v|_g^2+\theta(v),$$
where $|\cdot|_g$ is the norm induced by $g$, and $\theta$ is a one form on $\Sigma$ given by
\begin{equation}\label{e:1form}
\theta=(e^x+1)dy.
\end{equation}

We shall prove
\begin{prop}\label{prop:norbit}
	For every $k>c(L)$ (hence $k>c_u(L)$), there are no closed orbits of energy $k$  in any non-trivial free homotopy class for the magnetic flow of the pair $(g,-d\theta)$ on $\R\times\R/\Z$.
\end{prop}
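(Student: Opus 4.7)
The plan is to exploit the $y$-translation symmetry of $(g,\theta)$: since
$$L(q,v)=\tfrac12\bigl(\dot x^2+(1+e^x)^2\dot y^2\bigr)+(1+e^x)\dot y$$
is independent of $y$, Noether's theorem provides the conserved momentum $p_y=(1+e^x)^2\dot y+(1+e^x)\equiv C$ along every magnetic geodesic. Combined with the energy $\tfrac12(\dot x^2+(1+e^x)^2\dot y^2)=k$, this reduces the orbit equation to the one-dimensional
$$\dot x^2=f(u),\qquad f(u):=2k-(1-C/u)^2,\qquad u:=1+e^x\in(1,\infty),$$
together with $\dot y=(C-u)/u^2$.

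First I would identify $c(L)=\tfrac12$. A direct check on the cylinder gives $|\theta|_g\equiv 1$, so $|\theta(v)|\leq\tfrac12(1+|v|_g^2)$ and hence $L+k\geq k-\tfrac12$ pointwise, yielding $c(L)\leq\tfrac12$. For the reverse inequality, the horizontal loops $\gamma_{x_0}(t)=(x_0,-t/(1+e^{x_0}))$, of period $T=1+e^{x_0}$ and winding once negatively, satisfy $S_{L+k}(\gamma_{x_0})=(k-\tfrac12)T$, which is negative as soon as $k<\tfrac12$.

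Now fix $k>\tfrac12$ and assume for contradiction that a closed magnetic geodesic $\gamma$ exists; its image is compact, so $u$ ranges over a compact subinterval of $(1,\infty)$. The equation $f(u)=0$ is equivalent to $C/u=1\pm\sqrt{2k}$; since $k>\tfrac12$ forces $1-\sqrt{2k}<0<1+\sqrt{2k}$, for every $C\in\R$ at most one of the roots $u=C/(1\pm\sqrt{2k})$ is positive (and none when $C=0$), so $f$ has at most one zero $u_*\in(1,\infty)$. Together with $f(u)\to 2k-1>0$ as $u\to\infty$, this forces the allowed region $\{f\geq 0\}\cap(1,\infty)$ to be either all of $(1,\infty)$ (no zero) or the unbounded interval $[u_*,\infty)$; in both cases the orbit cannot remain trapped in a bounded subinterval of $(1,\infty)$ \emph{unless} it sits permanently at $u_*$. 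But the $x$-component of the Euler--Lagrange equation, evaluated at a would-be stationary orbit $u=u_*$, gives
$$\ddot x\big|_{u=u_*}=e^{x_0}\dot y\,(u_*\dot y+1)=(u_*-1)\,\frac{C(C-u_*)}{u_*^3},$$
which is strictly positive whenever $C\neq 0$ (the case $C=0$ was already excluded). Hence $x$ cannot stay at $u_*$, so $u$ must escape every compact subinterval of $(1,\infty)$, contradicting compactness of the image of $\gamma$.

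The proof bypasses the functional-analytic machinery of the paper entirely, because the reduced Hamiltonian system is completely integrable. The one subtlety is that a single turning point of $f$ is not enough to trap a bounded orbit: the orbit bounces off $u_*$ and runs away, which one must verify via the sign of $\ddot x$ at the turning point. The threshold $k=\tfrac12$ is precisely the value at which the constant-$x$ loops making $c(L)$ sharp cease to solve the $x$-Euler--Lagrange equation, which is why no closed orbit survives above it in any free homotopy class (in fact in no homotopy class at all).
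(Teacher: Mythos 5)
Your proof is correct. The identification $c(L)=\tfrac12$ via the pointwise bound $|\theta|_g\equiv 1$ (so $L+k\geq k-\tfrac12$) together with the horizontal test loops is a valid alternative to the paper's route through the Hamiltonian characterization $c(L)=\inf_u\sup_q H(q,d_qu)$. For the non-existence part, the decisive mechanism is the same in both arguments: at any point where $\dot x=0$, energy conservation gives $|(1+e^x)\dot y|=\sqrt{2k}>1$, whence the $x$-component of the Euler--Lagrange equation, $\ddot x=e^x\dot y\bigl((1+e^x)\dot y+1\bigr)$, is strictly positive --- your expression $(u_*-1)C(C-u_*)/u_*^3$ is exactly this quantity rewritten using $C=u(u\dot y+1)$ and $C-u=u^2\dot y$. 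The difference is organizational: the paper applies this observation once, at the maximum of $x$ along the putative periodic orbit (a turning point is automatically available there), and concludes immediately; you first reduce to the one-dimensional system $\dot x^2=f(u)$ via the Noether momentum $p_y$, show that $f$ has at most one zero in $(1,\infty)$ so that a periodic orbit would need coinciding max and min of $x$ and hence be constant in $x$, and only then invoke the positivity of $\ddot x$ to exclude the constant orbit. Your route is longer but yields a full phase portrait of the reduced system; the paper's is shorter because no global analysis of the effective potential is needed. One small correction: the paper's proof of this proposition is itself completely elementary --- the functional-analytic machinery you say you are bypassing is used only for the existence results in the body of the paper, not in this appendix.
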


Recall that the hamiltonian associated to $L$ is given by
$$H:T^*\Sigma\longrightarrow\R,\qquad H(q,p)=\frac{1}{2}|p-\theta_q|^2_{g^*},$$
where $g^*$ is the induced metric by $g$ on the cotangent space $T^*\Sigma$.

We also recall that the critical value $c(L)$ has an equivalent definition (see~\cite{FM}):
\begin{equation}\label{e:equidef}
c(L)=\inf\limits_{u\in C^\infty(\Sigma,\R)}\sup\limits_{q\in\Sigma}H(q,d_qu).
\end{equation}
We show that $c(L)=\frac{1}{2}$.  By choosing a constant function $u$, it follows from (\ref{e:equidef}) that
$$c(L)\leq \frac{1}{2}\sup\limits_{q\in \Sigma}|(e^x+1)dy|^2_{g^*}=\frac{1}{2}.
$$
Next, we pick a closed curve $\ga: t\mapsto (r,-t)\in \R\times\R/\Z$ with $t\in[0,1]$, then
\begin{equation}\label{e:app1}
S_{L+k}(\ga)=\frac{1}{2}(1+e^r)^2-e^r-1+k.
\end{equation}
If $k<\frac{1}{2}$,  from (\ref{e:app1})  we deduce that for $r$ small enough $S_{L+k}(\ga)<0$. So $c(L)\geq\frac{1}{2}$, and thus $c(L)=\frac{1}{2}$.

The Christoffel symbols of the Levi-Civita connection of the metric $g$ are computed as follows:
\begin{gather}
\Gamma^1_{11}=\Gamma^1_{12}=\Gamma^1_{21}=\Gamma^2_{11}=\Gamma^2_{22}=0, \label{e:Ch1}\\
\Gamma^1_{22}=-e^x(e^x+1),\quad\Gamma^2_{12}=\Gamma^2_{21}=\frac{e^x}{e^x+1}. \label{e:Ch2}
\end{gather}

Set $$\beta(x)=e^x+1.$$

\begin{proof}[Proof of Proposition~\ref{prop:norbit}]
	Assume that $\ga(t)=(x(t),y(t))$ is a closed magnetic geodesic of energy $k$ with $k>c(L)=\frac{1}{2}$,  i.e., $\ga$ solves the equations
	\begin{gather}
	\nat\dot{\ga}=Y(\dot{\ga})\label{e:orbit1}
	\quad \hbox{and} \\
	\frac{1}{2}|\dot{\ga}(t)|_g^2=k,\;\forall\;t\in [0,T]\label{e:obit2}.
	\end{gather}
By (\ref{df:Loforce}) and  (\ref{e:1form}), we find that
\begin{equation}\label{e:Y}
Y(u)=-\frac{\beta'}{\beta}Ju,
\end{equation}
where $J$ is the almost complex structure compatible with $g$, and satisfies
$$J\frac{\partial}{\partial x}= \frac{1}{\beta}\frac{\partial}{\partial y}\quad\hbox{and}\quad  J\frac{\partial}{\partial y}=-\beta\frac{\partial}{\partial x}.$$
From (\ref{e:Ch2}), (\ref{e:orbit1}) and (\ref{e:Y}), it follows that
\begin{equation}\label{e:orbit3}
\ddot{x}=e^x(e^x+1)\dot{y}^2+e^x\dot{y}=\frac{e^x}{e^x+1}\big(\beta^2\dot{y}^2+\beta\dot{y}\big).
\end{equation}
Since $\ga(t)$ is a periodic orbit, there exists a maximum $t'$ of $x(t)$, hence $x(t')=0$. Then, by~(\ref{e:obit2}), we see that
\begin{equation}\notag
\big(\beta\dot{y}\big)^2(t')=2k
\end{equation}
which implies
\begin{equation}\label{e:convex}
\big(\beta^2\dot{y}^2+\beta\dot{y}\big)(t')>0
\end{equation}
provided that $k>\frac{1}{2}$. Combining (\ref{e:orbit3}) with (\ref{e:convex}) yields $\ddot{x}(t')>0$  which implies that
$t'$ is not a maximum in contradiction with our assumption.
	
\end{proof}

\section{Appendix~B: the proof of Lemma~\ref{lem:Crgroup}}\label{app:lem9}
Following the line of the proofs in ~\cite[Theorem~2.1]{Wa} and~\cite[Theorem~I.4.2]{Cha}, the key idea to prove Lemma~\ref{lem:Crgroup} is to use the following deformation lemma.

\begin{lem}[Deformation lemma]\label{lem:deform}
Let $\M$ be a $C^2$-Riemannian-Hilbert manifold and let $f\in C^2(\M,\mathbb{R})$ satisfying  the Palais-Smale condition on $f^b_a$ with $a<b$. Suppose that $c\in (a,b)$ is an isolated critical value of $f$ in  $[a,b)$, and the connected components of $K_c:=f^{c}\cap \mathcal{K}(f)$ consist of isolated critical submanifolds $N_i, i=1,\ldots, k.$ If the gradient flow of $f$ near every $N_i$ preserves the fibers of the normal bundle of every $N_i$, then $f^c$ is a strong deformation retract of $f^b\setminus K_b$, i.e., there is a continuous mapping $\tau:[0,1]\times f^b\setminus K_b\to f^b\setminus K_b$ such that
\begin{itemize}
	\item[(1) ]$\tau(0,\cdot)=id$;
	\item[(2) ] $\tau(t,\cdot)|_{f^c}=id|_{f^c}$;
	\item[(3) ]$\tau(1,x)\in f^c,\;\forall x\in f^b\setminus K_b$.
\end{itemize}

\end{lem}

We assume Lemma~\ref{lem:deform} at this point, then we can prove Lemma~\ref{lem:Crgroup} as follows.

\begin{proof}[{Proof of Lemma~\ref{lem:Crgroup}}]
By Lemma~\ref{lem:deform},  $f^c$ is a strong deformation retract of $f^b$, and $f^a$ of $f^c\setminus K_c$. So we have
\[
H_*(f^b,f^a;\K)\cong H_*(f^c,f^c\setminus K_c;\K)\cong\bigoplus_{i=1}^r H_*(U_i\cap f^c,U_i\cap (f^c\setminus N_i);\K)=\bigoplus_{i=1}^rC_q(f, N_i;\K),
\]
where we have used the excision theorem of the singular homology theory in the second isomorphism.
\end{proof}

We shall use the following crucial assumption and lemma to prove the deformation lemma.

Let $N$ be an isolated critical submanifold of $f$ and  $B$ a tubular neighborhood of $N$ (being viewed as a normal  bundle $\nu N\to N$ in $\M$).  Consider the flow $\eta(t,u)$ defined by the equation
\begin{equation}
\begin{cases}
\frac{d\eta(t,u)}{dt}=-df\big((\eta(t,u))\big)\\
\eta(0,u)=u\in B.
\end{cases}
\end{equation}

\noindent {\bf Assumption A.} The flow preserves the fibers of $B$, that is, if there exists $t_0>0$ such that $\eta(u,t)\in B$ for any $t\in [0,t_0]$ and $\eta(0,u)\in B_y$ for some $y\in N$, then $\eta(t,u)\in B_y$ for any $t\in[0,t_0]$.

\noindent {\bf Lemma B. (see~[13, Chapter I.~Lemma 3.3])} Suppose that $K$ is a compact metric space, $F_1,F_2$ are compact subsets of $K$. Then either there is a connected component $K$ which connects $F_1$ with $F_2$, or there are compact subsets $M_1$ and $M_2$ in $K$ such that
$M_1\cap M_2=\emptyset$, $M_1\cup M_2=K$, $F_i\subset M_i$,
$i=1,2$.

\begin{proof}[{Proof of Lemma~\ref{lem:deform}}]
For each $x\in f^b\setminus (f^c\cup K_b)$, we define a flow
\begin{equation}
\begin{cases}
\dot{\eta}(t,x)=-\frac{df(\eta(t,x))}{\|df(\eta(t,x))\|^2}\\
\eta(0,x)=x
\end{cases}
\end{equation}
Then we have
\[f(\eta(t,x))=f(x)-t.\]
Let $[0,T(x))$ be the maximal solvable half interval. Then we have
$T(x)=f(x)-c$ and
\[\lim_{t\to T(x)}f(\eta(t,x))=c.\]
{\bf  Claim 1.}
$\lim_{t\to T(x)-0}\eta(t,x)$ exists, and hence $\eta(t,x)$ can be extended to $[0,T(x)]$ such that
$f(\eta(T(x),x))=c$.\\
To prove {\bf  Claim 1} we set
\[\alpha=\inf \limits_{t\in[0,T(x))}\hbox{dist}(\eta(t,x),K_c)\]
and discuss in two cases. If $\alpha>0$, by (PS) condition, there exists $\beta>0$ such that
\[
\inf \limits_{t\in[0,T(x))} \|df(\eta(t,x))\|\geq \beta.
\]
So we have
\[
\hbox{dist}\big(\eta(t_1,x),\eta(t_2,x)\big)\leq \int^{t_2}_{t_1}\bigg\|\frac{d\eta}{dt}\bigg\|dt\leq \frac{t_2-t_1}{\beta}.
\]
Since $T(x)$ is finite, $\lim_{t\to T(x)-0}\eta(t,x)$ exists.\\

For case
$\alpha=0$ we first show that
\begin{equation}\label{e:dist}
\lim_{t\to T(x)-0} \hbox{dist}\big(\eta(t,x),K_c\big)=0.
\end{equation}
If  not, there exists $t_i\to T(x)-0$ such that
\[
\hbox{dist}\big(\eta(t_i,x),K_c\big)\geq \epsilon>0
\]
On the other hand, in case $\alpha=0$ we have $t_i'\to T(x)-0$ such that
\[
\hbox{dist}\big(\eta(t_i',x),K_c\big)=0.
\]
Therefore, we get two sequences $t_i^*<t^{**}_i$, both converging to $T(x)$ such that
\[
\hbox{dist}\big(\eta(t_i^*,x),K_c\big)=\frac{\epsilon}{2},\quad
\hbox{dist}\big(\eta(t_i^{**},x),K_c\big)=\epsilon
\]
and
\[\eta(t,x)\in (\bar{K}_c)_\epsilon\setminus(K_c)_{\epsilon/2}\quad \forall t\in[t_i^*,t^{**}_i],\]
where $(K_c)_\delta$ denotes the $\delta$-neighborhood of $K_c$.

By (PS) condition, we have
\[
\inf\limits_{t\in[t_i^*,t_i^{**}]}\big\|df(\eta(t,x))\big\|\geq \gamma>0.
\]
Hence, we get
\[
\frac{\epsilon}{2}\leq \hbox{dist}\big(\eta(t_i^{**},x),\eta(t_i^{*},x)\big)
\leq \int^{t_i^{**}}_{t_i^*}\bigg\|\frac{d\eta}{dt}\bigg\|dt\leq
\frac{1}{\gamma}|t^{**}_i-t^*_i|\to 0.
\]
This contradiction implies (\ref{e:dist}).\\

Using (PS) condition again, for any sequence $t_i\to T(x)-0$, there is a convergent subsequence of $\eta(t_i,x)$. We claim that the limit set $\Lambda$ of $\{\eta(t,x):\; t\in[0,T(x))\}$ is a compact connected  subset of $K_c$. The compactness is obvious. We only need to prove the connectedness. If not, there exist open subsets $U$ and $V$ such that
\[
U\cap V=\emptyset,\quad \Lambda=(U\cap \Lambda)\cup(V\cap \Lambda),\quad U\cap \Lambda\neq\emptyset,\quad V\cap\Lambda\neq\emptyset.
\]
Pick $z\in U\cap \Lambda$ and $z'\in V\cap\Lambda$. Then there exist $t_i\to T(x)-0$ and $t'_i\to T(x)-0$ such that $\eta(t_i,x)\to z$ and
$\eta(t_i',x)\to z'$. For sufficiently large $i$, we have
\[\eta(t_i,x)\in U,\quad \eta(t'_i,x)\in V.\]
Due to the connectedness of $[t_i,t_i']$, there exists $t_i^*\in[t_i,t_i']$ (or $[t_i',t_i]$) such that
\[\eta(t^*_i,x)\notin U\cup V.\]
By (PS) condition, $\eta(t^*_i,x)$ has a limit point $z^*\in A$. But $z^*\notin U\cup V$-- a contradition, and thus $\Lambda$ is connected.

Now by the assumption about $K_c$, $\Lambda$ is a part of a certain isolated critical submanifold, say $N$. In what follows we show that
$\Lambda$ is indeed a point. We take a tubular neighborhood $B$ of $N$ so that {\bf Assumption A} holds for $B$. Clearly,
\begin{equation}\label{e:distN}
\lim_{t\to T(x)-0} \hbox{dist}(\eta(t,x),N)=0.
\end{equation}
So there is a $\delta>0$ such that $\eta(t,x)\in B$ for any $t\in[T(x)-\delta, T(x))$. Since the flow $\eta$ preserves each fiber $B_x$ of $B$, there exists $z\in\Lambda$ such that $\eta(t,x)\in B_z$ for all $t\in[T(x)-\delta, T(x))$. Notice that $N\cap B_z=\{z\}$, we deduce from (\ref{e:distN}) that
\[
\lim_{t\to T(x)-0}\eta(t,x)=z.
\]
Summing up the above two cases we complete the proof of {\bf Claim~1}.\\

Now we define the deformation retract map
\begin{equation}\notag
\tau(t,x)=\begin{cases}
\eta(tT(x),x)\quad &(t,x)\in[0,1)\times (f^b\setminus (f^c\cup K_b))\\
\lim\limits_{s\to 1-0}\eta(sT(x),x)\quad &(t,x)\in\{1\}\times (f^b\setminus (f^c\cup K_b))\\
x,\quad &(t,x)\in [0,1]\times f^c.
\end{cases}
\end{equation}
To verify the continuity of $\tau$, we consider the following cases:

\begin{itemize}
	\item[(a)] $(t,x)\in [0,1]\times (f^c)^\circ$;
	\item[(b)] $(t,x)\in [0,1)\times (f^b\setminus(f^c\cup K_b))$;
	\item[(c)] $(t,x)\in\{1\}\times (f^b\setminus (f^c\cup K_b))$;
	\item[(d)]$(t,x)\in [0,1]\times f^{-1}(c)$.
\end{itemize}
Case (a) is trivial. Case (b) is implied by the fundamental theorem of O.D.E. The proofs of Case (c) and Case (d) are similar. We only verify Case (c).

Let $x_0\in f^b\setminus (f^c\cup K_b)$, and let $z=\eta(T(x_0),x_0)\in K_c$. Then $z$ belongs to one of isolated critical submanifolds in $K_c$ which we denote by $N$ for simplicity.
Let $D(\epsilon_0)=\nu N(\epsilon_0)$ be the disc normal bundle of $N$ of radius $\epsilon_0>0$ in $\M$, being viewed as a tubular neighborhood of $N$, which satisfies {\bf Assumption A}.

\noindent {\bf Claim~2.} For any given $\epsilon\in (0,\epsilon_0]$, there is a $\delta>0$ such that
\[
\eta(t,x)\in D(\epsilon),\quad \hbox{if}\; t\in [T(x)-\delta, T(x))\;\hbox{and}\; \hbox{dist}(x,x_0)<\delta,
\]
where $D(\epsilon)=\nu N(\epsilon)$.

Assume that {\bf Claim~2} does not hold. Then there is an $\epsilon_1$ and sequences $t_n\to T(x_0)-0$, $x_n\to x_0$ as $n\to\infty$ such that
\begin{equation}\label{e:B}
\eta(t_n,x_n)\notin D(\epsilon_1).
\end{equation}

Let $F_1=\{N\}$ and $F_2=(\M\setminus D(\epsilon_0/2))\cap K_c$, then both $F_1$ and $F_2$ are compact subsets of $K_c$. If $F_1$ and $F_2$ are both nonempty, then from {\bf Lemma B} and the assumption about $K_c$, there are two compact subsets of $K_c$ denoted by $M_1$ and $M_2$ such that
\[
M_1\cup M_2=K_c,\; M_1\cap M_2=\emptyset,\; F_i\subset M_i,\;i=1,2.
\]
Clearly, $\hbox{dist}(M_1,M_2)>0$. If $F_2$ is empty, we let $M_1=K_c$ and $M_2=\emptyset$. \\
Set $E=M_2\cup (\M\setminus D(\epsilon_0))$, then $\alpha=\hbox{dist}(E,M_1)>0$. Without loss of generality,  we assume that
\[
\epsilon_1\leq \min\{\alpha /4,\epsilon_0/4\}.
\]

Pick $\delta_1>0$ such that $\hbox{dist}(\eta(t,x_0),z)<\epsilon_1/8$ for $t\in [T(x_0)-\delta_1,T(x_0))$. Then we have $\delta_2>0$ such that
$\hbox{dist}(\eta(t,x_0),\eta(t,x))<\epsilon_1/8$ for $t\in[0,T(x_0)-\delta_1]$ and $\hbox{dist}(x,x_0)<\delta_2$. So
we have
$\hbox{dist}(\eta(T(x_0)-\delta_1,x),z)<\epsilon_1/4$ for $\hbox{dist}(x,x_0)<\delta_2$. Repeating this process, one can find a subsequence of $x_n$ (still denoted by $x_n$) and another sequence $t_n'\to T(x_0)-0$, if $n\to\infty$, such that
\begin{equation}\label{e:t'}
\eta(t_n',x_n)\in D(\epsilon_1/4).
\end{equation}
This, together with (\ref{e:B}), implies that one can find two sequences $s_n',s_n''$ with $s_n', \to T(x_0)-0$ and $s_n''\to T(x_0)-0$ so that
\[
\hbox{dist}(\eta(s_n',x_n),F_1)=\epsilon_1\quad
\hbox{dist}(\eta(s_n'',x_n),E)=\epsilon_1
\]
and
\[
\eta(t,x_n)\notin (F_1)_{\epsilon_1}\cup (E)_{\epsilon_1}\; \forall t\in[s_n',s_n'']\quad n=1,2,\cdots
\]
By (PS) condition we have that
\[
\gamma=\inf_{x\in A}\|df(x)\|>0,\quad A=f^{-1}([a,b))\setminus
\big( (F_1)_{\epsilon_1}\cup (E)_{\epsilon_1}\big)
\]
It follows that
\[
\epsilon_1\leq \hbox{dist}\big(\eta(s_n',x_n),\eta(s_n'',x_n)\big)\leq \frac{1}{\gamma}|s_n'-s_n''|\to 0
\]
for $n\to\infty$. This is a contradiction, and hence {\bf Claim~2} is true.

Now we can prove $\tau$ is continuous at $(1,x_0)$. If not, there is
$\epsilon_2>0$ and $t_n,x_n$ satisfying $t_n\to T(x_0)-0$, $x_n\to x_0$ as $n\to\infty$ such that
\begin{equation}\label{e:z}
\hbox{dist}\big(\eta(t_n,x_n),z\big)\geq 2\epsilon_2.
\end{equation}
Here we ask $\epsilon_2\leq \epsilon_0/2$. By {\bf Claim~2} there is a $\delta>0$ such that
\[\eta(t,x)\in D(\epsilon_2)\;\hbox{for}\; t\in[T(x_0)-\delta,T(x_0)),\;\hbox{dist}(x,x_0)<\delta.\]
Hence, for $n$ large enough we have
\begin{equation}\label{e:seq}
\eta(t_n,x_n)\in D(\epsilon_2).
\end{equation}
From this and (\ref{e:z}) we get
\[
\hbox{dist}\big(\pi(\eta(t_n,x_n)),z\big)\geq \epsilon_2
\]
where $\pi:\nu N\to N$ is the bundle projection. Let $z_n=\pi(\eta(t_n,x_n))$. It follows from (\ref{e:seq}) and {\bf Assumption A} that if $n$ is large enough then for all $t\in[t_n,T(x_n))$, $\eta(t,x_n)$ belong to the fiber $B_{z_n}$, and hence
\[
\hbox{dist}\big(\eta(t,x_n),z\big)\geq \epsilon_2\quad \forall t\in[t_n,T(x_n)).
\]
However, similar to the proof of (\ref{e:t'}), for fixed $t'$ (approaching to $T(x_0)$) one can find $\delta'>0$ such that
\[
\hbox{dist}(\eta(t',x_n),z)< \epsilon_2\;\hbox{for}\;
\hbox{dist}(x_n,x_0)<\delta'.
\]
Therefore, for each $n$ there is a $t_n'$ with $t_n'<t_n$ and $t_n'\to T(x_0)$ as $n\to \infty$ such that $\eta(t_n',x_n)\notin D(\epsilon_0)$. Then similar to the case that (\ref{e:B}) holds, we arrive at a contradiction. This completes the proof of the continuity of $\tau$. Clearly, the properties (1)-(3) of the deformation retract $\tau$ are satisfied.

\end{proof}

\end{document}